\newtheorem{proposition}{\textbf{Proposition}}
\newtheorem{lemma}[proposition]{\textbf{Lemma}}
\newtheorem{theorem}[proposition]{\textbf{Theorem}}
\newtheorem{question}[proposition]{\textbf{Question}}
\theoremstyle{definition}
\newtheorem{definition}[proposition]{\textbf{Definition}}
\newtheorem{example}[proposition]{\textbf{Example}}
\newtheorem{remark}[proposition]{\textbf{Remark}}
\newtheorem{acknowledgements}{\textbf{Acknowledgements}}
\newcommand{\del}{\partial
}
\newcommand{\Lie}[1]{\operatorname{\textsl{#1}}}
\newcommand{\GL}{\Lie{GL}}
\newcommand{\SO}{\Lie{SO}}
\newcommand{\SL}{{\rm SL}}
\newcommand{\End}{{\rm End}}
\newcommand{\SU}{{\rm SU}}
\newcommand\C{{\mathbb C}}
\newcommand\Z{{\mathbb Z}}
\newcommand{\R}{{\mathbb R}}
\newcommand{\dbar}{{\bar\partial}}
\numberwithin{proposition}{section}
\numberwithin{equation}{section}
\title{Higher solutions of Hitchin's self-duality equations}
\author{Lynn Heller}
\author{Sebastian Heller}
\address{Institut f\"ur Differentialgeometrie\\
Welfengarten 1\\
30167 Hannover\\
Germany} 
\email{lynn.heller@math.uni-hannover.de}
\email{seb.heller@gmail.com}
\date{\today}
\begin{document}
\begin{abstract}
Solutions of Hitchin's self-duality equations corresponds to special real sections of the
Deligne-Hitchin moduli space -- twistor lines. A question posed by Simpson in 1997 asks whether all real sections give rise to global solutions of the self-duality equations. An affirmative answer would allow for complex analytic procedure to obtain all solutions of the self-duality equations. The purpose of this paper is to construct counter examples given by certain (branched) Willmore surfaces in $3$-space (with monodromy) via the generalised Whitham flow. Though these sections do not give rise to global solutions of the self-duality equations on the whole Riemann surface $M$, they induce solutions on an open and dense subset of it. This suggest a deeper connection between Willmore surfaces, i.e., rank $4$ harmonic maps theory, with the rank $2$ self-duality theory.
\end{abstract}
\maketitle
\vspace{-0.5cm}
\tableofcontents

\section*{Introduction}

\noindent
The starting point of our investigations are Hitchin's self-duality  equations on a compact Riemann surface  \cite{Hi1} \[F^\nabla=-[\Phi,\Phi^*];\;\;\;\bar\partial^\nabla\Phi=0,\]
 where $F^\nabla$ is the curvature of a special unitary connection $\nabla$ on a rank $2$ hermitian bundle $V$ over the Riemann surface $M$, and $\Phi$ 
 is a $(1,0)$-form 
  with values in the trace-free endomorphism bundle End$_0(V)$.
This is a 2-dimensional reduction of the self-dual Yang-Mills equations invariant under the (unitary) gauge group. Though it cannot be explicitly solved so far,  the moduli space $\mathcal M$ of solutions  possesses a very rich geometric structure.\\

Restricting to irreducible solutions Hitchin \cite{Hi1} showed that  $\mathcal M$ is a smooth manifold of dimension $12g-12$ for Riemann surfaces of genus $g \geq 2$. Moreover, these irreducible solutions are uniquely determined by their Higgs pair $(\bar\partial^\nabla,\Phi)$ up to unitary gauge transformations. From this perspective $\Phi$ is a holomorphic End$_0(V)$-valued $1$-form for the holomorphic vector bundle $(V,\bar\partial^\nabla),$ and the irreducibility of the solution translates to the stability of the Higgs pair: $\Phi$-invariant holomorphic  line subbundles of $V$ have strictly negative degree. Conversely, Hitchin \cite{Hi1} and Simpson \cite{Si0} have shown independently that every stable Higgs pair  gives rise to an irreducible solution of the self-duality equations. 
Therefore, there exist a $1:1$ correspondence between the moduli spaces of stable Higgs pairs and irreducible self-duality solutions -- the Hitchin-Kobayashi correspondence. By construction, the moduli space of stable Higgs bundles $(\bar \del^\nabla, \Phi)$ is a holomorphic symplectic manifold containing the cotangent bundle of the moduli space of stable holomorphic bundles as an open dense subset. Thus through the Hitchin-Kobayashi correspondence (the smooth part of) $\mathcal M$ inherits a complex structure $I$.
\\

\noindent
From another point of view it was observed that the connection $\nabla+\Phi+\Phi^*$ is flat. Donaldson \cite{Do}, using Eells and Sampson's \cite{ES} heat flow construction, showed that every irreducible flat $ \SL(2,\mathbb C)$-connection uniquely determines
a solution of the self-duality equations (up to gauge-equivalence). Since the moduli space of irreducible flat $\SL(2,\mathbb C)$-connections is again a holomorphic symplectic manifold, $\mathcal M$ naturally inherits a second complex structure $J$. Composing the two complex structures, a third complex structure $K$ is obtained rendering $\mathcal M$ into a hyper-K\"ahler manifold: the three complex structures anti-commute and are K\"ahler with respect to the same natural $L^2$-metric. \\

\noindent
The transition between the different pictures and thus the dependence of the different complex structures of $\mathcal M$ on each other is difficult, except in the case where the underlying Riemann surface is a torus. 
The construction of the Deligne-Hitchin moduli space  $\mathcal M_{DH} \rightarrow \mathbb C P^1$ \cite{Si, Si2} is an effort to interpolate between these pictures using a parameter $\lambda \in \C P^1,$ where the Higgs pair can be found at $\lambda =0$ and the flat connection $\nabla + \Phi + \Phi^*$ at $\lambda = 1$. It is constructed such that the so-called associated family of flat connections 
\begin{equation}\label{SD}
\lambda\in\mathbb C^*\longmapsto \nabla^\lambda:=\nabla+\lambda^{-1}\Phi+\lambda\Phi^*
\end{equation}
yields a holomorphic section of $\mathcal M_{DH} \rightarrow \mathbb C P^1$ of a particularly simple form: it satisfies a reality condition and gives rise to a so-called twistor line when identifying $\mathcal M_{DH} \rightarrow \mathbb C P^1$ with the twistor
space of $\mathcal M$, see \cite{Si}. A natural question, due to Simpson \cite{Si}, is whether all real (holomorphic) sections are twistor lines, i.e., whether they all correspond to solutions of Hitchin's self-duality equations. As noted by Simpson, an affirmative answer would allow, at least ``philosophically'', for a complex analytic procedure to obtain all solutions of the self-duality equations.
\\

 \begin{figure}[b]
  \centering
  \includegraphics[width=0.475\textwidth]{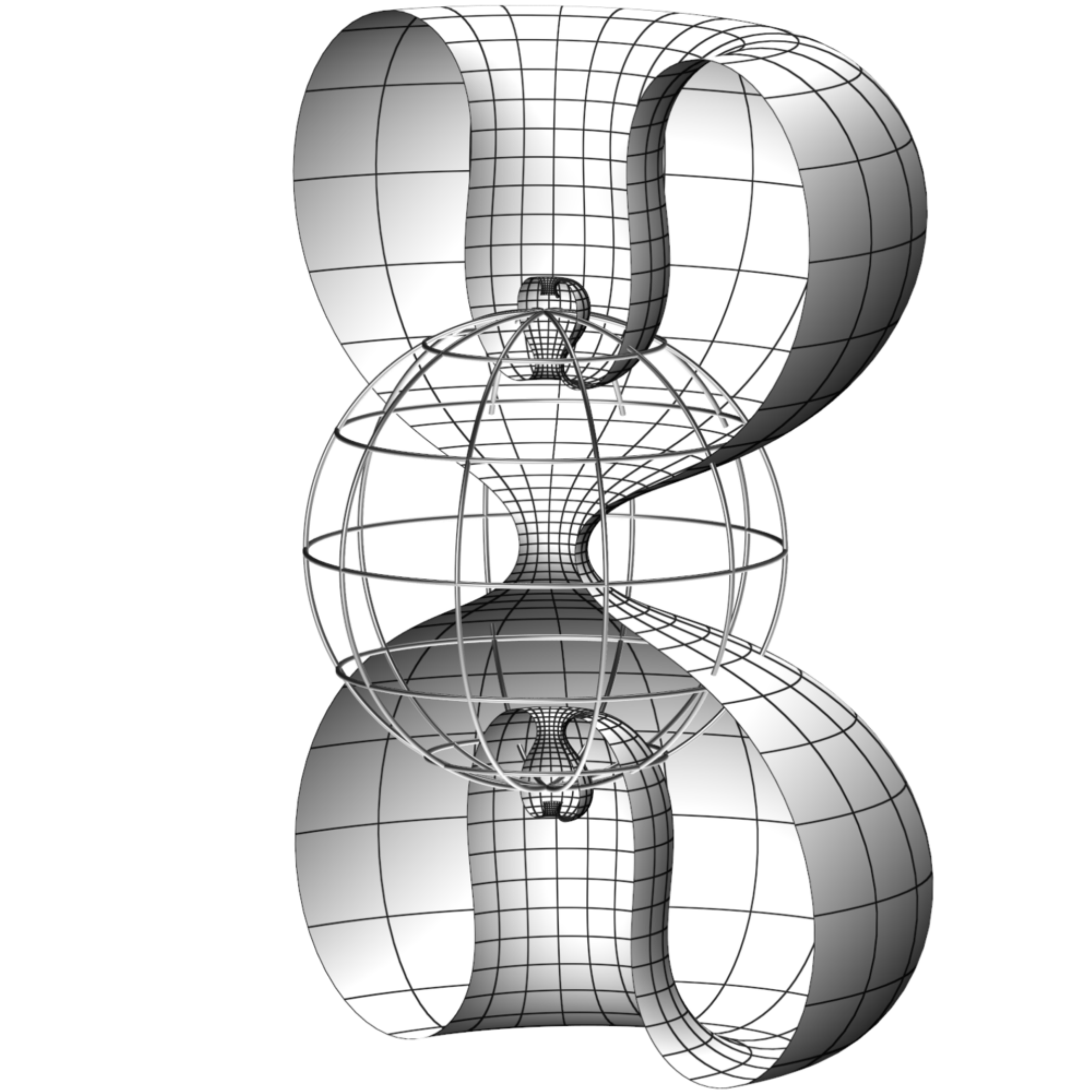}
  \caption{\small
The picture shows a Willmore cylinder in the round 3-sphere. The intersection of it with the two
   hyperbolic 3-spaces, given by the complement of a 2-sphere in $S^3$, is minimal.
    The surface shown is stereographically projected to $\R^3$,
    the wireframe designates the ideal boundary at infinity of the hyperbolic 3-space. Image by Nick Schmitt.}
  \label{figure1}
\end{figure}

This paper gives a negative answer to this question by constructing counter examples arising from certain Willmore surfaces. Willmore surfaces $f\colon M\to S^3$ are critical points of the Willmore functional \begin{equation}\label{Willmore}\int_M (|\vec{H}|^2 + 1) dA,
\end{equation} 
where $\vec{H}$ denotes the mean curvature of the immersion and $dA$ is the induced area form.
The Willmore functional is invariant under conformal transformations of the 3-sphere. Examples of Willmore surfaces
are given by minimal surfaces in the constant curvature subgeometries of the conformal 3-sphere.
Willmore surfaces 
have been studied via integrable systems techniques, see for example in \cite{FLPP,Bo}.
 We adjust the generalised Whitham flow for  minimal and constant mean curvature (CMC) surfaces in the $3$-sphere developed in \cite{HeHeSch} to flow from equivariant Willmore cylinders discovered by Babich and Bobenko \cite{BaBo} to (branched) Willmore surfaces of higher genus. The key observation here is that the Babich-Bobenko examples solve Hitchin's self-duality equations away from their umbilic lines, i.e., they are solutions to the self-duality equations away from 1-dimensional singularity sets on a torus, see Figure \ref{figure1}.

In order to construct real sections of the Deligne-Hitchin moduli space which do not correspond to twistor lines, we start with  Willmore surfaces of Babich-Bobenko type. We show that they correspond to  families of flat connections 
satisfying the reality condition of the self-duality equations.
Then we flow these initial families of flat connections with the generalised Whitham flow introduced in \cite{HeHeSch} towards families of flat connections on higher genus surfaces. At small rational times $\rho$ we obtain the desired counter examples on high genus surfaces. In order to avoid singular points of the moduli space, i.e., reducible flat connections, we drop the extrinsic closing condition of the surfaces and fix the spectral curve $\Sigma$ of the initial surface instead. When applied to solutions of spectral genus 0, the flow yields global ($\mathbb Z_{g+1}$-symmetric) solutions of the self-duality equations.  Therefore, we call these new real sections (corresponding to Willmore tori of spectral genus $1$) {\em higher solutions} of the self-duality equations.
 They turn out to solve the self-duality equations on an open and dense subset of the Riemann surface $M$. 

There are in fact two types of real sections covering the antipodal involution $\lambda\mapsto-\bar\lambda^{-1}$ of $\C P^1$ corresponding to the two real subgroups $\SU(2)$ and $\SU(1,1)$ of $\SL(2,\C)$. The $\SU(1,1)$-case corresponds to (equivariant) harmonic maps into the space of oriented circles in the 2-sphere $\SL(2,\C)/\SU(1,1)$, and examples 
are constructed in \cite{BHR}. In contrast to the examples constructed here, those differ from twistor lines by a  $\mathbb Z_2$-invariant, see Sections 
\ref{subSrealS} and \ref{relatedwork},
and are not related to solutions of the self-duality equations. \\

The paper is organised as follows. We first introduce the notion and the most important properties of the Deligne-Hitchin moduli space  $\mathcal M_{DH}$ of a compact Riemann surface in Section \ref{Sec1}. 
In Section \ref{sec2} we describe 
the families of flat $\SL(2,\C)$-connections on tori in terms of (algebro-geometric) spectral data . The spectral data (constructed in Theorem \ref{torus-spec-data}) will serve as initial data for a flow.
In section \ref{SecConstruction} we recall the construction of a 2:1 covering of the moduli space of
flat connections $\nabla^\lambda$ on the $4$-punctured sphere with prescribed local monodromies of $\nabla^\lambda$ around the punctures.  Thereafter, we use the eigenvalue $\rho$ of the logarithmic local monodromy as 
the flow parameter and adapt the generalised Whitham flow techniques of \cite{HeHeSch} to deform our  initial 
data on a torus in Section \ref{SecConstruction}. In Section \ref{sec:erns} 
we prove the existence of real sections of $\mathcal M_{DH}$ over 
the Riemann surface $M_q$  of genus $g(\rho)$  for rational $\rho=-\tfrac{p}{q}$ which map into  the smooth part of the moduli space. 
Finally, we show in Section \ref{sec5} that these new real sections give rise to solutions of the self-duality equations on open and dense subsets of the compact Riemann surface $M_q$.

\section{The Deligne-Hitchin moduli space $\mathcal M_{DH}$}\label{Sec1}

The Deligne-Hitchin moduli space $\mathcal M_{DH}=\mathcal M_{DH}(M)$ of a compact Riemann surface $M$
provides a natural tool to study associated families of flat connections of solutions to the self-duality equations. It was first defined by
Deligne (see \cite{Si, Si2}) as a complex analytic reincarnation of the twistor space (see \cite{HKLR}) associated to the hyper-K\"ahler moduli space  of self-duality solutions.

\begin{definition}
For $\lambda\in\C$ fixed,
a (integrable) $\lambda$-connection on a vector bundle $V\to M$ over a Riemann surface $M$ is a pair $(\dbar, D)$ consisting of a holomorphic structure on $V$ and a linear first order differential operator
\[D\colon\Gamma(M,V)\to\Omega^{(1,0)}(M,V)\]
satisfying the $\lambda$-Leibniz rule
\[D(fs)=\lambda\partial f\otimes s+f Ds\] for functions $f$ and sections $s$, and the integrability condition
\begin{equation}\label{intcond}
D\dbar+\dbar D=0.\end{equation}
\end{definition}
\begin{example}
Let $V = M \times \C^n$ be a trivial rank $n$ bundle equipped with the trivial connection $d.$ Let $d=d'+d''$
be its decomposition into its $(1,0)$ and $(0,1)$ parts.  Then, for $\lambda \in \C$ fixed, the pair
\[(\dbar_0,\lambda \del_0) = (d'',\lambda d') \]
gives the so-called trivial $\lambda$-connection on $V.$
 For $\lambda = 0$ the corresponding $\lambda$-connection reduces to the trivial holomorphic structure on $V.$
\end{example}

\begin{remark}
The operators  $D$ and $\dbar$ also act on $(0,1)$-forms and $(1,0)$-forms respectively. 
For $\lambda = 0$ the integrability condition \eqref{intcond} is equivalent to
 \[D=\Phi\in H^0(M, K \End(V)),\]
i.e., being complex linear and holomorphic, and for $\lambda \neq 0$ we have that 
\[\nabla=\tfrac{1}{\lambda}D+\dbar\]
is a flat connection. Here and in the following $K=(T^*M)^{(1,0)} $ is the canonical bundle of the Riemann surface $M.$
\end{remark}

\begin{definition}
A $\lambda$-connection $(\dbar, D)$ for $\lambda=0$ is called a Higgs pair. In this case,  $D=\Phi\in H^0(M, K\End(V))$ is tensorial and will be referred to as Higgs field. 
\end{definition}

In this paper we restrict to the subclass of $\lambda$-connections corresponding to the group $G_\C= \SL(2,\C).$ Note that a $\lambda$-connection on a vector bundle $V$ induces $\lambda$-connections on all associated tensor bundles, e.g. $V^*$ and $\Lambda^nV.$  For $\lambda=0$ and $n=\text{rank}(V)$, the induced $\lambda$-connection of $(\dbar, D)$ on  $\Lambda^nV$ is given
by the trace of $D$  and the induced holomorphic structure on the determinant bundle.
\begin{definition}
A $\SL (2, \C)$ $\lambda$-connection  is a $\lambda$-connection on a rank 2 vector bundle $V\rightarrow M$ over a compact Riemann surface $M$, such that
the induced $\lambda$-connection on $\Lambda^2 V$ is trivial.\end{definition}

For the rest of the section we consider the case where $M$ is compact and has genus $g \geq 2.$
Moreover, we assume without loss of generality that
\[V=M \times \C^2,\]
since every vector bundle $V$ with trivial determinant is (topologically) trivial. 
\begin{definition}
Let $M$ be a compact Riemann surface.
A $\SL(2, \C)$ $\lambda$-connection $(\dbar, D)$ is called stable, if
every $\dbar$-holomorphic subbundle $L\subset V=\underline\C^2$ with
\[D(\Gamma(M,L))\subset\Omega^{(1,0)}(M,L)\]
satisfies
\[\text{deg}(L)< 0\]

and semi-stable if 
\[\text{deg}(L)\leq 0.\]

All other $\lambda$-connections are called unstable. A $\SL(2,\C)$ $\lambda$-connection is called polystable if it is stable or the direct sum
of  $\lambda$-connections on degree $0$ line bundles.
\end{definition}

For $\lambda\neq0$, every $\lambda$-connection $(\dbar, D)$ is automatically semi-stable. Moreover, $(\dbar, D)$ is stable if and only if the connection $\nabla=\tfrac{1}{\lambda}D+\dbar$ is irreducible. For $\lambda=0$ there exist unstable $\lambda$-connections and their gauge orbits are infinitesimal close to the gauge orbits of (certain) stable $\lambda$-connections. Moreover, the gauge orbits of certain semi-stable $\lambda$-connections are infinitesimal close to each other, see also the notion of $\mathcal S$-equivalence for the case of holomorphic bundles \cite{NR}.
In order to obtain a well-behaved moduli space we restrict to polystable
$\lambda$-connections.

For $\lambda \in \C$ fixed, let $\mathcal A^2_\lambda$ denote  the space of (integrable) $\SL(2,\C)$  $\lambda$-connections, and $\mathring{\mathcal A}^2_\lambda$ the subspace of polystable $\lambda$-connections. Then there is a natural action of the gauge group
 \[\mathcal G=\{g\colon M\to \SL(2,\C)\}\]
 on $\mathcal A^2_\lambda$ and  for $\lambda\neq 0$ the quotient 
\[\mathring{\mathcal A}^2_\lambda/\mathcal G\]
 is biholomorphic to the moduli space of flat totally reducible $\SL(2,\C)$-connections.
 Recall that a totally reducible connection is by definition a direct sum of irreducible connections.
As such it is a a complex analytic space which is smooth away from (gauge orbits of) reducible flat connections. For $\lambda=0$ the quotient  
\[\mathring{\mathcal A}_0^2/\mathcal G\]
is the moduli space of polystable Higgs bundles $\mathcal M_{Dol}$.

\begin{definition}
Let $M$ be a compact Riemann surface of genus $g \geq 2$. The Hodge moduli space $\mathcal M_{Hod}=\mathcal M_{Hod}(M)$ is the space of all polystable, $\SL(2,\C)$ $\lambda$-connections on $M$ modulo gauge transformations.

The gauge-equivalence class of a $\lambda$-connection $(\lambda,\dbar,D)$
is denoted by $$[\lambda,\dbar,D]\in\mathcal M_{Hod}$$ or by
\[[\lambda,\dbar,D]_M\in\mathcal M_{Hod}( M)\]
to emphasis its dependence on the Riemann surface.
\end{definition}
\begin{remark}
The Hodge moduli space can be equipped with an algebraic structure through the GIT construction \cite{Si}.  We prefer to think of $\mathcal M_{Hod}$ as a complex analytic space (with quotient topology) 
whose smooth points are given by the gauge orbits of stable $\lambda$-connections. These form an open and dense subset in $\mathcal M_{Hod}$.  As a stable $\lambda$-connection does not permit non-trivial automorphisms (trivial automorphisms are constant multiples of the identity), the smooth structure can be constructed by  standard gauge theoretic methods. 
\end{remark}

The Hodge moduli space admits a holomorphic map 
\[f=f_M\colon\mathcal M_{Hod}\longrightarrow \C; \quad [\lambda,\dbar,D] \longmapsto\lambda\] whose fiber at
$\lambda=0$ is the (polystable) Higgs moduli space $\mathcal M_{Dol}$, and at $\lambda=1$ it is the deRham moduli space of flat (totally reducible) $\SL(2,\C)$-connections $\mathcal M_{dR}$, which we consider as complex analytic spaces endowed with their respective natural complex structures. 

\subsection{The gluing construction of the Deligne-Hitchin moduli space}
Let $M$ be a Riemann surface and $\overline M$ be its complex conjugate Riemann surface.
As differentiable manifolds we have $M\cong\overline M$ and thus their deRham moduli spaces of flat $\SL(2,\C)$-connections
are naturally isomorphic (as complex analytic spaces, not as algebraic spaces). Through the Deligne gluing \cite{Si2} 
\[\Psi\colon\mathcal M_{Hod}(M)\setminus f_M^{-1}(0)\to\mathcal M_{Hod}(\overline M)\setminus f_{\overline M}^{-1}(0);\;[\lambda,\dbar,D]_M\mapsto[\tfrac{1}{\lambda},\tfrac{1}{\lambda}D,\tfrac{1}{\lambda}\dbar]_{\overline M}\]
we can define the Deligne-Hitchin moduli space to be 
\[\mathcal M_{DH}=\mathcal M_{Hod}(M)\cup_\Psi\mathcal M_{Hod}(\overline M).\]

The Deligne-Hitchin moduli space admits a natural fibration  $f\colon\mathcal M_{DH}\to\C P^1$ whose
restriction to $\mathcal M_{Hod}(M)$ is $f_M$ and whose
restriction to $\mathcal M_{Hod}(\overline M)$ is $1/f_{\overline M}.$

\begin{remark}\label{smoothMdh}
Note that the Deligne-gluing  map $\Psi$ maps stable $\lambda$-connections on $M$ to stable $\tfrac{1}{\lambda}$-connections on $\overline M.$ Hence, it maps the smooth locus of $\mathcal M_{Hod}(M)$ (consisting of stable $\lambda$-connections) to the smooth locus of $\mathcal M_{Hod}(\overline M)$, and $\mathcal M_{DH}$ is equipped
with a structure of a complex manifold at all of its stable points.
\end{remark}

\begin{definition}
A section of $\mathcal M_{DH}$ is a holomorphic map 

$$s: \C P^1 \rightarrow \mathcal M_{DH}$$

such that $f \circ s = $Id. 
\end{definition}

It is well-known (and one of the motivation behind its definition is) that
\[f\colon \mathcal M_{DH}\longrightarrow \C P^1\] 
is holomorphic isomorphic to the twistor fibration $\mathcal P\to\C P^1$ of the hyper-K\"ahler metric on the moduli space of solutions to Hitchin's
self-duality equations, at least at the smooth points, see \cite{Si}. The isomorphism is given as follows.
Take a solution $(\nabla,\Phi)$ of the self-duality equation
and the twistor line
\[\lambda\longmapsto (\dbar^\nabla,\Phi,\lambda)\]
with respect to the $C^\infty$-trivialisation $\mathcal P\cong \mathcal M_{Dol}\times\C P^1.$
Then, this twistor line is holomorphically isomorphic to the section given by the holomorphic
map
\begin{equation}\label{twistorsection}
\lambda\in\C\longmapsto[\lambda,\dbar^\nabla+\lambda\Phi^*,\lambda \partial^\nabla+\Phi]_M\in\mathcal M_{Hod}(M)\subset\mathcal M_{DH}.\end{equation}

It follows from the work of Hitchin \cite{Hi1} and Donaldson \cite{Do} that every stable point
in $\mathcal M_{DH}$ uniquely determines a twistor line. 
\begin{definition}
A holomorphic section $s$ of $\mathcal M_{DH}$ is called stable, if the $\lambda$-connection
$s(\lambda)$ is stable for all $\lambda\in \C^*$  and if the Higgs pairs $s(0)$   on $M$ and $s(\infty)$ on $\overline{M}$ are stable.  
\end{definition}
Note that a twistor line $s$ is already stable if $s(\lambda_0)$ is stable for one $\lambda_0\in\C$.

\subsection{Automorphisms of the Deligne-Hitchin moduli space}
The Deligne-Hitchin moduli space admits some natural automorphisms which will play important roles in the later sections.
First of all, for every $\mu\in\mathbb C^*$ the (multiplicative) action of $\mu$ on $\C P^1$ has a natural lift
to $\mathcal M_{DH}$ by
\[\mu([\lambda,\dbar,D])=[\mu\lambda,\dbar,\mu D].\]

\begin{definition}
We denote by $N : \mathcal M_{DH} \rightarrow \mathcal M_{DH}$ 
the map given by 
\[[\lambda,\dbar,D] \longmapsto [-\lambda,\dbar, -D].\]
\end{definition}

Second, in the general case (e.g., for GL$(n, \C)$ rather than $\SL(2, \C)$ connections) taking the dual of 
a flat connection gives rise to an automorphism of the moduli space of flat connections which extends to an automorphism of
Deligne-Hitchin moduli space we denote by $\sigma$. 
The automorphism $\sigma\colon \mathcal M_{DH}\to\mathcal M_{DH}$ is given by
\[[\lambda,\dbar,D] \longmapsto [\lambda,\dbar^\star, D^\star],\]
where $()^\star$ denote the dual operator.
Since $SL(2,\C)$-connections and $\lambda$-connections are self-dual, $D$ is just the identity map in our case.

The last automorphism we introduce is anti-holomorphic and denoted by $C.$ 
\begin{definition}\label{defC2}
Let $C\colon \mathcal M_{DH}\longrightarrow \mathcal M_{DH}$ be the continuation of the map
 $$\tilde C: \mathcal M_{Hod}(M) \longrightarrow \mathcal M_{Hod}(\overline M)$$ given by 

\[\tilde C([\lambda,\dbar,D]_M) \longmapsto [\bar\lambda,\bar\dbar,\bar D]_{\overline M}.\]

To be more concrete, for  
\[\dbar = \dbar^0+\eta \quad \text{and} \quad D = \lambda(\partial^0)+\omega\]
where $d=\dbar^0+\partial^0$ is the trivial connection, $\eta\in\Omega^{0,1}(M,\mathfrak{sl}(n,\C)),$ and $\omega\in\Omega^{1,0}(M,\mathfrak{sl}(n,\C)),$ we define
the complex conjugate on the trivial $\C^n$-bundle over $\overline M$ to be
 \[\bar \dbar = \partial^0+\bar\eta \quad \text{and} \quad \bar D = \bar\lambda(\dbar^0)+\bar\omega.\]
\end{definition}

The map $C$ covers the map 
\[\lambda\in\C P^1\longmapsto \bar\lambda^{-1}\in\C P^1.\]
It is important to note that $C$ and $N$ commute. Moreover, both maps are involutive. Thus, their composition 
\[\mathcal T=CN\] is an involution as well,
covering the fixed-point free involution
$\lambda\mapsto - \bar\lambda^{-1}$ on $\C P^1.$

\begin{remark}\label{Remreal}
For Deligne-Hitchin moduli spaces associated to other Lie groups than $\SL(2,\C)$ there exist different anti-holomorphic involutions 
$$\mathcal M_{DH}\longrightarrow \mathcal M_{DH}$$
covering $\lambda\mapsto\bar\lambda^{-1}$.
For $\SL(n,\C)$ and $\GL(n,\C)$ , another natural choice 
is given by
\[[\lambda,\dbar,D]_M \longmapsto [\bar\lambda,\dbar^*, D^*]_{\overline M}\]
with respect to a hermitian metric on $V$. For $n=2$ we have
(with respect to the standard hermitian metric on $\underline{\C}^2\to M$)
\[\overline\dbar=\begin{pmatrix} 0&-1\\1&0\end{pmatrix} \; \dbar^*\begin{pmatrix}0&1\\-1&0\end{pmatrix}\;\; \text{ and }\;\; \overline D=\begin{pmatrix} 0&-1\\1&0\end{pmatrix}D^*\begin{pmatrix}0&1\\-1&0\end{pmatrix}.\]
 Thus both definitions coincide.

 In the case of rank 1 Deligne-Hitchin moduli spaces, we denote by $C$ the real involution induced by complex conjugation.
\end{remark}

\subsection{Real sections}\label{subSrealS}
In the following let $M$ be a compact Riemann surface of genus $g\geq2$.
We consider
the antiholomorphic involution of the associated Deligne-Hitchin moduli space 
\[\mathcal T=CN\colon\mathcal M_{DH}\longrightarrow\mathcal M_{DH} \]
covering \[\lambda\longmapsto-\bar\lambda^{-1}\] of $\mathbb CP^1.$
A holomorphic section  $s$ of $\mathcal M_{DH}$ is called real with respect to $\mathcal T$ if
\[\mathcal T(s(-\bar\lambda^{-1}))=s(\lambda)\]
holds for all $\lambda\in\mathbb CP^1$.
\begin{example}
Using Remark \ref{Remreal} we observe that twistor lines \eqref{twistorsection} are real holomorphic sections  with respect to $\mathcal T$
 for  $\mathcal G_\C=\SL(2,\C)$.  
\end{example}

In order to obtain a global lift of a section $s$ of $\mathcal M_{DH}$ to the space of flat connections or integrable $\lambda$-connections, it is technically necessary to consider $\lambda$-connections
with connection 1-forms that are only $\mathcal C^k$ on $M$ (rather than $\mathcal C^\infty$). For every $\lambda \in \C$ these $\mathcal C^k$-$\lambda$-connections are in fact gauge equivalent (by a gauge transformation of class $\mathcal C^{k+1}$) to  smooth $\lambda$-connections. We will make use of the following Lemma which is  proven analogously to the proof of Theorem 8 in \cite{H3}.
\begin{lemma}\label{lift-section}
Let $s$ be a holomorphic and stable section of $\mathcal M_{DH}\to\mathbb CP^1.$ Then there exists a holomorphic lift $\hat s_k$ of $s$ on $\mathbb C\subset\mathbb CP^1$  to the  space
of  $\mathcal C^k$-$\lambda$-connections on $M$  for every $k\in\mathbb N^{\geq2}$.
\end{lemma}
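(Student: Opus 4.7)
The plan is to construct $\hat s_k$ by first establishing local lifts via a slice theorem for the gauge action at stable $\lambda$-connections, then promoting these to a global lift over $\C$ by exploiting that $\C$ is a simply-connected Stein manifold. Working in the $\mathcal{C}^k$ category (rather than $C^\infty$) is crucial: it turns both the space of $\lambda$-connections and the gauge group into Banach manifolds, so that the implicit function theorem and elliptic regularity apply in a clean form.

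First I would fix $\lambda_0\in\C$ and select any $\mathcal{C}^k$ plain $\lambda$-connection $(\dbar_0,D_0)$ representing $s(\lambda_0)$; such a representative exists by the local lift of $s$ to plain $\lambda$-connections built into the definition of holomorphicity. Because $s(\lambda_0)$ is stable, its gauge stabilizer equals $\{\pm\mathrm{Id}\}$, hence the linearized gauge action is injective modulo the center. Applying the Banach implicit function theorem to a Coulomb-type gauge slice around $(\dbar_0,D_0)$ produces a holomorphic Kuranishi slice for the gauge action inside the Banach space of $\mathcal{C}^k$ $\lambda$-connections. Composing with the local plain lift of $s$ yields a holomorphic lift $\hat s_k^{(U)}\colon U\to \mathcal{A}^\lambda_{\mathcal{C}^k}$ on some neighborhood $U\subset\C$ of $\lambda_0$.

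To globalize, I would cover $\C$ by such neighborhoods $\{U_i\}$ with lifts $\hat s_k^{(i)}$. On any overlap $U_i\cap U_j$ the two lifts differ by a holomorphic gauge transformation $g_{ij}$ valued in $\mathcal{G}^{k+1}/\{\pm\mathrm{Id}\}$, yielding a holomorphic $1$-cocycle for the sheaf of holomorphic maps into this Banach Lie group. Since $\C$ is a $1$-dimensional Stein manifold, the Bungart--Leiterer generalization of Grauert's Oka principle guarantees that this cohomology vanishes, so the cocycle splits and the local lifts glue into a holomorphic lift $\hat s_k$ defined on all of $\C$.

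The step I expect to be the main obstacle is the cohomological vanishing in the infinite-dimensional non-abelian setting, together with handling the $\{\pm\mathrm{Id}\}$ quotient cleanly. A more hands-on route, presumably the one followed in Theorem~8 of the cited work, is to bypass \v{C}ech arguments by fixing one global Coulomb gauge with respect to a single reference $\lambda$-connection and then solving the resulting $\lambda$-family of elliptic equations: stability of $s(\lambda)$ for every $\lambda\in\C$ ensures the linearization on the trace-free gauge Lie algebra has trivial kernel and cokernel for all $\lambda$, so the Banach implicit function theorem with $\lambda$ as a parameter produces a holomorphic family of Coulomb representatives of $s$ over all of $\C$ in one step. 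Either way, the resulting $\hat s_k$ lands in the space of $\mathcal{C}^k$ $\lambda$-connections as required.
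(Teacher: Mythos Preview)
Your proposal is correct and matches the approach the paper has in mind. The paper does not spell out a proof of this lemma but only refers to Theorem~8 of \cite{H3}; however, the very technique you describe---local holomorphic lifts at stable points obtained via a slice argument, followed by interpreting the transition gauges as a holomorphic $1$-cocycle with values in a Banach Lie group $\mathcal G^k$ and splitting it via Bungart's Oka-type theorem \cite{Bu}---is exactly what the paper carries out in Step~(3) of the proof of Theorem~\ref{realXtorusdata}. Your concern about the $\{\pm\mathrm{Id}\}$ ambiguity is legitimate but harmless here: on the simply connected base $\C$ the resulting $\mathbb Z_2$-valued obstruction cocycle is trivial, so one may lift the transition data to genuine $\SL(2,\C)$-gauges before invoking Bungart.
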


We restrict ourselves from now on to stable sections $s$. {\em Admissible} sections are particularly well-behaved sections $s$ of $\mathcal M_{DH}$ of the form
\[s(\lambda)=[\lambda,\bar\partial+\lambda \Psi,\lambda D+\Phi]\]
for a holomorphic structure $\bar\partial$, a $\partial$-operator $D$, an endomorphism-valued $(1,0)$-form $\Phi$
and an endomorphism-valued $(0,1)$-form $\Psi,$
such that $(\bar\partial,\Phi)$ and $(D,\Psi)$ are stable Higgs pairs on $M$ and $\overline M,$ respectively.
In particular, twistor lines are admissible.

Consider a real, holomorphic and stable section $s.$ By Lemma \ref{lift-section} $s$ admits a $\mathcal C^k$-lift
$$\lambda\mapsto (\lambda,\bar\partial^\lambda, D^\lambda).$$
Let
$$\lambda \in \mathbb C^*\subset\mathbb CP^1\mapsto \nabla^\lambda:=\bar\partial^\lambda+\tfrac{1}{\lambda}D^\lambda$$
 be the corresponding family of flat connections. Then $s$ being real translates to the existence of  a $\mathbb C^*$-family of $\SL(2,\C)$ gauge transformations $g(\lambda)$ satisfying
\begin{equation}\label{realeqsec}
\nabla^\lambda.g(\lambda)=\overline{\nabla^{-\bar\lambda^{-1}}}.
\end{equation}  
We call a family of flat connections $\nabla^\lambda$ satisfying \eqref{realeqsec} real.
Applying equation \eqref{realeqsec} twice we obtain 

$$\nabla^\lambda.g(\lambda) \overline{g(-\bar\lambda^{-1})} = \nabla^\lambda.$$

Because the section $s$ is stable, the connections $\nabla^\lambda$ are irreducible for all $\lambda \in \mathbb C^*$. Therefore $g(\lambda) \overline{g(-\bar\lambda^{-1})}$ is a constant multiple of the identity for every $\lambda \in \C^*$.
Moreover, we can assume the map $\lambda\in\C^*\mapsto g(\lambda)$ 
to be holomorphic in $\lambda$ by the constructions in the proof of Theorem 7 in \cite{H3}. Note that for every $\lambda\in\C^*$ the gauge 
$g(\lambda)$ has constant determinant $d(\lambda)$ on $M$, but the holomorphic map $\lambda\in\C^*\mapsto d(\lambda)$ cannot be chosen to be constant in general, see \cite[Proposition 2.11]{BHR}.
That we can find a family of SL$(2, \mathbb C)$ gauge transformations for real sections is the content of the next two lemmas.

\begin{lemma}\label{pmId}
Let $\lambda \in \C^* \mapsto \nabla^\lambda$ be a real and holomorphic family of irreducible flat $\SL(2,\C)$-connections and $g$ the corresponding holomorphic family of 
$\GL(2,\C)$-gauge transformations.
Then, there is a holomorphic map $h\colon U\to \C^*$ defined on an open neighbourhood $U$ of the closed unit disc $D_1:= \{ \lambda \in \C\; | \; |\lambda|^2 \leq 1\}$, such that $\tilde g:=hg$ satisfies
\begin{equation}\label{signagain}
\tilde g(\lambda)\overline{\tilde g(-\bar\lambda^{-1})}=\pm \text{Id}\end{equation} for all $\lambda\in\C^*$.
The sign on the right hand does not depend on the choice of $h$.
\end{lemma}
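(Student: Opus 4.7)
First I translate \eqref{realeqsec} into a scalar equation. The relation $\nabla^\lambda.g(\lambda)\,\overline{g(-\bar\lambda^{-1})} = \nabla^\lambda$ recorded before the lemma, together with irreducibility of $\nabla^\lambda$ and Schur's lemma, forces
\[
g(\lambda)\,\overline{g(-\bar\lambda^{-1})} \;=\; c(\lambda)\,\text{Id}
\]
for a unique holomorphic nonvanishing function $c\colon\C^*\to\C^*$. Writing $\sigma\lambda := -\bar\lambda^{-1}$ and applying this same relation at $\sigma\lambda$, the two central products $g(\lambda)\,\overline{g(\sigma\lambda)}$ and $\overline{g(\sigma\lambda)}\,g(\lambda)$ coincide, and comparison of the associated scalars yields the reality $c(\lambda) = \overline{c(\sigma\lambda)}$. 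The task thereby reduces to finding holomorphic nonvanishing $h\colon U\to\C^*$ on a neighborhood $U$ of $\overline{D_1}$ with $h(\lambda)\,\overline{h(\sigma\lambda)}\,c(\lambda) = \pm 1$.

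Next I check that $c$ has winding number zero along $S^1$ around $0$. Since $\sigma|_{S^1}$ is the orientation-preserving degree-one map $\lambda\mapsto-\lambda$, the reality forces $\text{wind}(c) = -\text{wind}(c)=0$. Hence $L := -\log c$ admits a single-valued holomorphic branch on an annular neighborhood of $S^1$, which I split uniquely by Laurent expansion as $L = p + q$, with $p$ extending holomorphically to a neighborhood of $\overline{D_1}$ and $q$ to $\C P^1\setminus D_1^{\text{open}}$ normalized by $q(\infty)=0$.

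The reality $c = \overline{c\circ\sigma}$ lifts to $L(\lambda) - \overline{L(\sigma\lambda)} = 2\pi i\,n$ for a single integer $n$ depending on the branch. Since $\overline{q\circ\sigma}$ is holomorphic on the interior and $\overline{p\circ\sigma}$ on the exterior (as $\sigma$ swaps the two domains), uniqueness of the Laurent splitting gives
\[
p(\lambda) = \overline{q(\sigma\lambda)} + C, \qquad q(\lambda) = \overline{p(\sigma\lambda)} - 2\pi i\,n - C
\]
for a unique constant $C\in\C$. Evaluating at $\lambda=0$ and $\lambda=\infty$, using $\sigma(0)=\infty$, forces $p(0)=C$ and $\overline{C} - 2\pi i\,n - C = 0$, whence $\text{Im}(C) = -\pi n$. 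Setting $h(\lambda) := e^{-\text{Re}(C)/2}\,e^{p(\lambda)}$, a direct exponentiation of $p(\lambda)+\overline{p(\sigma\lambda)} = L(\lambda)+2\pi i\,n+C$ gives $h(\lambda)\,\overline{h(\sigma\lambda)}\,c(\lambda) = (-1)^n$, so $\tilde g := hg$ satisfies \eqref{signagain} with sign $(-1)^n$.

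Finally I argue the invariance of the sign. A branch shift $\log c\to\log c+2\pi ik$ sends $n$ to $n-2k$, so the parity $(-1)^n$ is intrinsic to $c$; and a replacement of $g$ by a scalar multiple (the only remaining freedom, by irreducibility) alters $L$ by a term of the form $\alpha+\overline{\alpha\circ\sigma}$ that vanishes in the reality defect. Moreover, any two valid $h$ differ by a holomorphic nonvanishing $k$ on a neighborhood of $\overline{D_1}$ with $k(\lambda)\,\overline{k(\sigma\lambda)}=\pm1$; applying this relation at $\lambda$ and at $\sigma\lambda$ and taking imaginary parts of the single-valued $\log k$ shows the minus case would force $\pi\in 2\pi\Z$, excluding it. The main subtlety, as I see it, is precisely this $\pm$ dichotomy: the Birkhoff/Laurent splitting itself is routine, but the sign emerges only after pinning down the constant $C$ via $q(\infty)=0$ and reading off the obstruction $\text{Im}(C)\in\pi\Z$ modulo $2\pi\Z$ that it encodes.
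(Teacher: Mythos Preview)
Your proof is correct and follows essentially the same route as the paper's: reduce to a scalar $c$ by irreducibility, show its winding number on $S^1$ vanishes, take a logarithm, split it additively into positive and negative Laurent parts, and read off the sign from the imaginary part of the constant term. The only notable differences are cosmetic---you deduce the vanishing winding number directly from the reality $c=\overline{c\circ\sigma}$ (the paper instead passes through $\det g$, which amounts to the same conjugation-reverses-winding observation), and you spell out the invariance of the sign more carefully than the paper's ``can be easily checked''.
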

\begin{proof}
By irreducibility of $\nabla^\lambda$ we have
\[g(\lambda)\overline{ g(-\bar\lambda^{-1})}=\hat f(\lambda)\text{Id}\]
for a holomorphic function $\hat f$ without zeros along $S^1.$ Moreover, we can compute the index of the curve $\gamma = \hat f |_{S^1}$ to be
$$\text{Ind}_0(\gamma) = \tfrac{1}{2\pi i}\int_\gamma \tfrac{d\hat f}{\hat f}= \tfrac{1}{4\pi i}\int_\gamma \tfrac{d(\hat f^2)}{(\hat f)^2} = \tfrac{1}{4\pi i}\int_\gamma   \tfrac{ d \left(\det g (\lambda) \right )}{ \det g(\lambda) } +\tfrac{1}{4\pi i} \overline{ \int_\gamma  \tfrac{d \left(\det g (-\bar\lambda^{-1}) \right )}{ \det g (-\bar\lambda^{-1})}}  = 0.$$
Therefore, there exists a well-defined holomorphic function $f$ such that $\hat f(\lambda)=\exp({f(\lambda)} )$. Consider the Laurent series of $f$ for $\lambda \in S^1$ 
\[f(\lambda)=\sum_{k\in\mathbb Z} f_k\lambda^k.\]
Then $\hat f(\lambda)=\overline{\hat f(-\bar\lambda^{-1})}$ yields for $k\neq0$
\[(-1)^k\bar f_{k}=f_{-k}\]
and \[f_0=\bar f_0+n2\pi i,\]
for some $n\in\mathbb Z.$ Hence $\tilde g=h\, g$ for the holomorphic function
\[h(\lambda)=\exp\left ({-\sum_{k\in\mathbb Z^{>0}} f_k\lambda^k-\tfrac{1}{2} \text{Re}{f_0}} \right)\]
has the desired properties.

Let $\hat h$ be another holomorphic map such that $\hat g = \hat h \tilde g$ satisfies \eqref{signagain}. The map $\hat h$ is holomorphic and therefore we have
$$\hat h (\lambda) = \Sigma_{n = 0}^\infty a_n \lambda^n, \quad  \text{ and } \quad \overline{\hat h (-\bar\lambda^{-1})} = \sum_{n = 0}^\infty (-1)^n\bar a_n \lambda^{-n}.$$
Hence \eqref{signagain} yields 
$$ \hat h(\lambda) \overline{\hat h(-\bar \lambda^{-1})} = \pm 1.$$ 

Since $\hat h (\lambda)$ is positive and $\hat h(-\bar \lambda^{-1})$ is negative we have that both maps must be constant. Therefore 
$$ \hat h(\lambda) \overline{\hat h(-\bar \lambda^{-1})} = 1$$
showing the claimed independence of the choice of $h$ as long as $h$ is well-defined on the closed unit disc.
\end{proof}

Although not every section $s$ of $\mathcal M_{DH}$ admits lifts over $$\C\subset\C P^1 \quad \text{ and }\quad\C P^1\setminus\{0\}$$ which are  related by a $\SL(2,\C)$-valued family of gauge transformations over $\C^*$ (see Definition 2.8 and Proposition 2.11. in \cite{BHR}), this property holds for real  sections.

\begin{lemma}\label{lemma118}
Let $ \lambda \in \C^*\mapsto \nabla^\lambda$ be a real and holomorphic family of irreducible flat $\SL(2,\C)$-connections. Then there is a family of gauge transformations $g(\lambda)$ with det$[g(\lambda)] \equiv 1$ satisfying \eqref{realeqsec}.
 The family $g(\lambda)$ is unique up to sign.
\end{lemma}
\begin{proof}
Assume that $g$ cannot be chosen to be $\SL(2, \C)$-valued for all $\lambda\in\C^*$. Then the index of the curve $\tilde\gamma = $det$g|_{S^1}$ 
$$\text{Ind}_0(\tilde\gamma) = \tfrac{1}{2\pi i}\int_{\tilde \gamma} \tfrac{d \det(g)}{\det(g)}$$
must be odd. Otherwise the square root of $\det (g)$ would be well-defined  and $\tilde g= \tfrac{1}{\sqrt{\det(g)}} g$ defines a family of $\SL(2, \C)$-gauge transformations satisfying \eqref{realeqsec}. 
By multiplying $g$ with a  suitable holomorphic function defined on $\C^*$ the index of $\tilde\gamma$ changes by an even integer. Thus we can assume without loss of generality that \text{Ind}$_0(\tilde\gamma) =1$.  
Applying Lemma \ref{pmId} we can further assume 

\begin{equation}\label{posneg}
g(\lambda)\overline{g(-\bar\lambda^{-1})}=\pm\text{Id}.\end{equation}

For $p \in M$ fixed, consider the Birkhoff factorisation (see \cite[Chapter 8]{PS}  or Section \ref{sec5} for  a short summary) 
of $g_p(\lambda)$ 
\begin{equation}\label{bfac}
g_p(\lambda)=g_+(\lambda)\begin{pmatrix} \lambda^{k+1} & 0\\ 0 &\lambda^{-k}\end{pmatrix} g_-(\lambda)
\end{equation}
for some $k \in \mathbb N,$ where $g_+$ is a holomorphic map into $\SL(2,\C)$ that extends to $\lambda = 0$ while $g_-$ extends to $\lambda = \infty.$ The diagonal matrix diag$(\lambda^{k+1}, \lambda^{-k})$ accounts for the fact that \text{Ind}$_0(\tilde\gamma) =1$. By the uniqueness part of the Birkhoff factorisation every other pair $(\tilde g_+,\tilde g_-)$ satisfying \eqref{bfac} is given by 
\[\tilde g_+(\lambda)=g_+(\lambda)\begin{pmatrix} \tfrac{1}{a} & -\tfrac{1}{ad} b(\lambda^{-1})\lambda^{2k+1}\\ 0 & \tfrac{1}{d}\end{pmatrix}\]
and
\[\tilde g_-(\lambda)=\begin{pmatrix} a &  b(\lambda^{-1})\\ 0 & d\end{pmatrix} g_-(\lambda),\]
with constants $a,d\in\C^*$ and a polynomial $b$ (in the variable $\lambda^{-1}$) of degree at most $2k+1.$ 
By \eqref{posneg} we can relate the Birkhoff factorisations of $g(\lambda)$ and $\overline{g(-\bar\lambda^{-1})}$ :
\[g(\lambda)^{-1}=\pm\overline{g_+(-\bar\lambda^{-1})}\begin{pmatrix} (-1)^{k+1}\lambda^{-k-1} & 0\\ 0 &(-1)^{k}\lambda^{k}\end{pmatrix}\overline{g_-(-\bar\lambda^{-1})}\]
and therefore
\[g(\lambda)=\pm(-1)^k\overline{g_-(-\bar\lambda^{-1})}^{-1}\begin{pmatrix} -\lambda^{k+1} & 0\\ 0 &\lambda^{-k}\end{pmatrix}\overline{g_+(-\bar\lambda^{-1})}^{-1}.\]
Hence  there exist $a, d \in \C^*$ and a polynomial $b (\lambda^{-1})$ such that
\[\pm(-1)^k\overline{g_-(-\bar\lambda^{-1})}^{-1}\begin{pmatrix} -1 & 0\\ 0 &1\end{pmatrix}= g_+(\lambda)\begin{pmatrix} \tfrac{1}{a} & -\tfrac{1}{ad} b(\lambda^{-1})\lambda^{2k+1}\\ 0 & \tfrac{1}{d}\end{pmatrix}\]
and
\[\overline{g_+(-\bar\lambda^{-1})}^{-1}=\begin{pmatrix} a &  b(\lambda^{-1})\\ 0 & d\end{pmatrix} g_-(\lambda).\]
 Putting the last two equation together yields that either $-a\bar a=1$ or $-\bar d d=1$, depending on the sign of  $(-1)^k$ and the sign of \eqref{posneg},
 giving a contradiction in either case. The uniqueness of $g$ up to sign follows from the stability of $s$.
\end{proof}

For every $\C^*$-lift $\nabla^\lambda$ of a real holomorphic section $s$ of $\mathcal M_{DH}$  the two lemmas above yield the existence of  a holomorphic family of
 $\SL(2,\mathbb C)$-gauge transformations $g(\lambda)$ (unique up to sign)
  satisfying \eqref{realeqsec} and \begin{equation}\label{realeqsecsign}
g(\lambda)\overline{g(-\bar\lambda^{-1})}=\pm\text{Id}.\end{equation}
By \cite[Lemma 2.15]{BHR} the sign on the right hand side is independent of the lift $\nabla^\lambda$ of $s$  motivating the following definition.
\begin{definition}\cite[Definition 2.16]{BHR}
A stable real section $s$ of $\mathcal M_{DH}$ is called positive or negative depending on the sign of \eqref{realeqsecsign}.
\end{definition}

\begin{remark}
A real section $s$ corresponding to a solution of Hitchin's self-duality equations is negative.
In fact, a canonical lift is given by  the associated family of flat connections, and for the standard hermitian structure on $\mathbb C^2$ we obtain that
\[g(\lambda)=\begin{pmatrix}0&1\\-1&0\end{pmatrix}\] is constant in $\lambda$ and
squares to $-\text{Id}.$
\end{remark}

\begin{question}
Simpson \cite[$\S$4]{Si} raised the question whether every real holomorphic section of the Deligne-Simpson twistor space $\mathcal M_{DH}$ induces a solution of the self-duality equations.
\end{question}

\begin{remark}{Related work}\label{relatedwork} It is shown in \cite[Theorem 3.6]{BHR} 
that all stable, admissible, and negative sections of $\mathcal M_{DH}$ are twistor lines. 
In Simpson's notation \cite{Si3} being admissible is equivalent to being pure as a mixed twistor structure. The new sections of the Deligne-Hitchin moduli space constructed in this paper are stable and negative but not admissible.
\end{remark}

\subsection{The conformal Gauss map}\label{subsubdual}
In the definition of positive and negative real holomorphic sections, the stability of the lift is crucial. The conformal Gauss map is a geometric example where dropping the stability condition at $\lambda =0$ gives two lifts of the section $s$ on $\C^*$ with different signs in \eqref{realeqsecsign}.

Given a real holomorphic stable section $s$ of the Deligne-Hitchin moduli space with a (non-zero) nilpotent  Higgs field $\Phi$ at $\lambda=0$,
we consider the kernel bundle $$L:=\ker\Phi$$ and a complementary (smooth) subbundle $\tilde L$ of $V=\underline\C^2$.
Define the family of gauge transformations
\[\lambda\in\C^*\mapsto h(\lambda):=\begin{pmatrix} 1&0\\0&\lambda \end{pmatrix}\]
with respect to 
\[V=L\oplus \tilde L.\]
A direct computation gives (see \cite{BeHRo}) 
\begin{equation}\label{eq:dual}\tilde\nabla^\lambda:=\nabla^\lambda.h(\lambda)=\lambda^{-1}\tilde \Phi+\tilde\nabla+...\end{equation}
for a new nilpotent Higgs pair $(\dbar^{\tilde\nabla},\tilde\Phi).$ This Higgs pair $(\dbar^{\tilde\nabla},\tilde\Phi)$ is not stable,
since
$\tilde L=\ker\tilde\Phi$ and 
\[\deg(\tilde L)=-\deg(L)>0\] is positive
by assumption. Thus
$\tilde\nabla^\lambda$ is not a lift of $s$ on the whole complex plane $\C\subset\C P^1$.

 Lemma \ref{pmId} and Lemma \ref{lemma118} gives rise to a family of $\SL(2,\C)$ gauge transformations $ g(\lambda)$ satisfying
\[\overline{\nabla^{-\bar\lambda^{-1}}}=\nabla^\lambda. g(\lambda).\]
This family is unique up to sign. Moreover, the family of SL$(2, \mathbb C)$ gauge transformations
\begin{equation}\label{mcsgauge}
\tilde g(\lambda) :=\lambda h^{-1}(\lambda) g(\lambda)\overline{h(-\bar\lambda^{-1})}.\end{equation}
 satisfies
 \[ \overline{\tilde\nabla^{-\bar\lambda^{-1}}}=\tilde\nabla^\lambda.\tilde g(\lambda)\]

and a direct computation shows
\[\tilde g(\lambda)\overline{\tilde g(-\bar\lambda^{-1})}=-g(\lambda)\overline{g(-\bar\lambda^{-1})}.\]
This means that we have been able to change the sign by gauging to an unstable Higgs pair at $\lambda=0$.
\begin{remark}\label{rem:mcs}
The construction \eqref{eq:dual} is well-known in the theory of immersed surfaces in 3-space. In our case, a twistor line gives rise
to an equivariant harmonic map $f$ into the hyperbolic 3-space $\SL(2,\C)/SU(2).$
The Higgs field being nilpotent corresponds to 
the harmonic map being conformal, hence $f$ is minimal. 
Consider  the hyperbolic 3-space as a subspace of the round 3-sphere with a round 2-sphere as its boundary at infinity, e.g., the Poincare ball model. 
Every point of the surface uniquely determines  the best approximating 2-sphere at the point. In the minimal surface case, these 2-spheres are totally geodesic and  intersect the boundary at infinity  perpendicularly, see \cite{BuCa} and the references therein  or  \cite[$\S$ 4]{BeHRo} for a short summary. This yields a conformal harmonic map $G$ from $M$ into  the space of oriented circles in the 2-sphere (the boundary at infinity). The associated families of flat connections $\nabla^\lambda$ and $\tilde\nabla^\lambda$ of $f$ and $G,$ respectively, are related via the construction \eqref{eq:dual}. The map $G$ is called the conformal Gauss map or central sphere congruence.
\end{remark}
\begin{example}\label{rem:mcs2}
Even if the section $s$ is not admissible, i.e., the gauges $g(\lambda)$ do not admit a global Birkhoff factorisation into positive and negative gauges on $M$, $\tilde g(\lambda)$ in \eqref{mcsgauge} might. Let 
\begin{equation}\label{su11sd}
\lambda\in\C^*\mapsto\tilde\nabla^\lambda=\nabla +\lambda^{-1}\Phi+\lambda\Phi^{\#}\end{equation} be a family of flat connections, where $\nabla$ is a
$\SU(1,1)$-connection and $\#$ denotes the adjoint for the standard
$\SU(1,1)$ structure $(.,.)$ on $\C^2.$ We refer to $(\nabla,\Phi,(.,.))$ as a $\SU(1,1)$ self-duality solution. 

Assume there exist a curve $\gamma\subset M$ with
\[L= \ker(\Phi)=\ker(\Phi^{\#}),\]
i.e., $L$ is a null line with respect to $(.,.)$ along $\gamma.$ On $M \setminus \gamma$ we can reverse the construction in \eqref{eq:dual}
by taking  the gauge 
\[\tilde h (\lambda) = h^{-1}(\lambda)=\begin{pmatrix} 1&0\\0&\lambda^{-1}\end{pmatrix}\]
with respect to \[\underline\C^2=\ker(\Phi)\oplus \ker(\Phi^{\#}).\] It can be directly checked that the family
\[\tilde\nabla^\lambda.h^{-1}(\lambda)\]
solves the self-duality equations $M\setminus \gamma$ with respect to 
the hermitian metric  
\[(K.,.)\]
on $\underline\C^2$,
where 
\[K=\pm \begin{pmatrix}1&0\\0&-1\end{pmatrix}.\]
The sign depends on the component of $M\setminus \gamma$, and is chosen such that the unitary structure is positive definite.  Note that $h^{-1}$ becomes singular along $\gamma.$ 
The real holomorphic sections constructed in Section \ref{sec:erns} have the same behaviour as this example.
\end{example}

\section{Spectral genus 1 solutions on a torus}\label{sec2}
The aim of this section is to provide spectral data of singular solutions of the cosh-Gordon equation on a Riemann surface $M$ of genus 1. Away from the singularities these give rise to minimal surfaces in the hyperbolic 3-space and correspond therefore to local solutions of the self-duality equations. The spectral data will serve as initial data for the flow constructed in this paper.

We denote the rank one Deligne-Hitchin moduli space corresponding to the group $G_\C=\C^*$ of a Riemann surface  $M$ by $\mathcal M^1_{DH}(M)=\mathcal M^1_{DH}$. 
As in the $\SL(2,\C)$ case, the rank one Deligne-Hitchin moduli space of $M$ admits the automorphisms $N$ and the real involutions 
$C$ and $\mathcal T=CN.$
It also has an additional
holomorphic involution
\[\sigma\colon \mathcal M^1_{DH}(M)\to\mathcal M^1_{DH}(M); [\lambda,\bar\partial, D]\mapsto [\lambda,\bar\partial^\star, D^\star],\]
taking a $\lambda$-connection to its dual.
\begin{theorem}\label{torus-spec-data}
Let $\Gamma=2\Z+2\tau\Z$ and $\Lambda=\Z+\tau_{spec}\Z$ with $\tau\in i\R^{>0}$, $\tau_{spec}\in i\R^{>1}$ be rectangular lattices and let $M=\C/\Gamma$ and $\Sigma=\C/\Lambda$  be the corresponding Riemann surfaces. 
 Then there exists a holomorphic map
\[\mathcal D\colon\Sigma\to \mathcal M^1_{DH}(M)\] satisfying
\begin{enumerate}
\item $\lambda:=f_M\circ\mathcal D\colon\Sigma \to \C P^1$ is a double covering branched over $0,\infty,r,-\tfrac{1}{r}$
for some $0<r<1;$
\item there is a holomorphic involution $\sigma\colon\Sigma\to\Sigma$ such that
$$\mathcal D\circ\sigma=\sigma\circ\mathcal D \quad \text{ and } \quad\lambda\circ\sigma=\lambda;$$
\item there is a real involution $\eta\colon\Sigma\to\Sigma$ covering the antipodal involution\linebreak $\lambda\mapsto -\bar\lambda^{-1}$
such that
\[\sigma(\mathcal D(\eta(\xi)))=\mathcal T(\mathcal D(\xi))\]
for all $\xi\in\Sigma$.
\end{enumerate}
\end{theorem}

\begin{proof} On the Riemann surface $\Sigma$ 
consider the real fixed-point free involution
\[\eta\colon\Sigma\longrightarrow\Sigma;\quad [\xi] \longmapsto [\bar\xi+\tfrac{1+\tau_{spec}}{2}]\]
and the elliptic involution  
$$\sigma:[\xi]\longmapsto[-\xi].$$

Since $\eta$ commutes with $\sigma,$ it induces a real fixed-point free involution on $\Sigma/\sigma \cong \C P^1$ which, after applying a suitable Moebius transformation, is the antipodal map.
Thus there is a 2-fold covering 
\[\lambda\colon \Sigma\longrightarrow \C P^1\]

with $\lambda \circ \sigma = \lambda$,  $\lambda([0])=0,$ $\lambda\circ\eta(\xi)=-\overline{\lambda(\xi)^{-1}}$ for all $\xi\in\Sigma$. Without loss of generality we can assume
  \[r:=\lambda([\tfrac{1}{2}])\in\R\quad \text{ and } \quad R:=\lambda([\tfrac{\tau_{spec}}{2}])=-\tfrac{1}{r}\in\R.\]
The preimage of $\lambda = \infty$ is $\xi=[\tfrac{1+\tau_{spec}}{2}].$
It can be shown that for $\tau_{spec}\in i\R^{>1}$
\[0<r<1\quad \text{ or equivalently }\quad R<-1.\]

For fixed $\tau_{spec}$ and $\lambda: \Sigma \rightarrow \C P^1$
we choose constants $a,b\in \C$ such that the Weierstrass $\wp$-function on $\Sigma = \C /\Lambda$ satisfies
\begin{equation}\label{choiceab1}
\int_1 a\wp+bd\xi=2
\end{equation}
and
\begin{equation}\label{choiceab2}
\int_{\tau_{spec}} a\wp+bd\xi=0,
\end{equation}
 where we identify $\Lambda\cong H_1(\Sigma,\mathbb Z).$
In fact $a,b$ are real and given by
\[a=-\frac{\tau_{spec}}{\pi i}\quad \text{ and } \quad b=-2{\eta_3}{\pi i},\]
and $\eta_3=\zeta(\tfrac{\tau_{spec}}{2})$ for the Weierstrass $\zeta$-function, see \cite[page 10]{HeHeSch}.

Let $\chi$ to be the meromorphic function on $\C$ uniquely determined by
\begin{equation}
\label{def-chi}d\chi=\frac{\pi i}{2 \tau} (a\wp(\xi-\tfrac{1+\tau_{spec}}{2})+b)d\xi\;\;\text{ and }\;\; \chi(0)=0.\end{equation}
By construction, $\chi$ has a first order pole in $\xi = \tfrac{1+\tau_{spec}}{2}.$
Moreover, let $\alpha$ be the meromorphic function on $\C$ given by
\begin{equation}\label{def-alpha}\alpha(\xi)= \overline{\chi(\bar\xi-\tfrac{1+\tau_{spec}}{2})}+\frac{\pi i}{2 \tau}.\end{equation}
It has a first order pole in $\xi = 0.$

Consider the Riemann surface $M=\C/\Gamma$ equipped with its affine coordinate $w$ and  the map  
\[\mathcal D\colon\Sigma\to\mathcal M^1_{DH}(M);  \quad \xi \mapsto \left[\lambda(\xi), \bar\partial^0-\chi(\xi) d\bar w,\lambda(\xi) (\partial^0+\alpha(\xi)d w)\right]_M.\]

We want to show that $\mathcal D$ is well-defined on $\Sigma=\C/\Lambda$. 
The map $\mathcal D$ is well-defined on $$\C\setminus\left(\Lambda\cup\left(\tfrac{1+\tau_{spec}}{2}+\Lambda\right)\right).$$
It extends holomorphically to $\xi=0$ as $\alpha$ has a first order pole and $\lambda$ has a zero at $\xi=0$. Using the Deligne-gluing, 
$\mathcal D$ extends holomorphically to $\xi=\tfrac{1+\tau_{spec}}{2}$ as well.
Due to the periodicity of $\chi$ and $\alpha$,  $\xi$ and $\xi+\gamma$ define the same point in $\mathcal M^1_{DH}(M)$ for all $\gamma\in\Lambda$, showing that $\mathcal D$ is well-defined on $\Sigma$.  The properties (1)-(3) are easy to check.
\end{proof}
\begin{remark}
In the following, we refer to $\Sigma$ as the spectral curve and to $(\Sigma,\lambda,\chi,\alpha)$ as spectral data.
\end{remark}
\begin{lemma}\label{no-spin}
With the notations of Theorem \ref{torus-spec-data} and for
generic $\tau\in i\R^{>0}$  the image of the map
\[\xi\in \mathcal S:= \lambda^{-1}(S^1)\subset \Sigma\longmapsto [\dbar_0-\chi(\xi)d\bar w]\in \mathrm{Jac}(M)\]
 does not contain a spin bundle on $M$. 
In particular,  
 $$[\bar\partial-\chi d\bar w]\colon \lambda^{-1}(D_1)\subset\Sigma\longrightarrow \mathrm{Jac}(M)$$  only maps $[0],[\tfrac{1}{2}]\in\Sigma$ to the trivial line bundle in $\mathrm{Jac}(M)$  for small $\tau\in i\R^{>0}$.
 \end{lemma}
 \begin{remark}
A spin bundle $S$ on a Riemann surface $M$ is a holomorphic line bundle for which $S^2=K$ is the canonical bundle of $M.$ In the case
of $M$ being a torus, a spin bundle is a square root of the trivial holomorphic line bundle.
\end{remark}

\begin{proof}
The holomorphic structure 
\[\dbar_0-\chi d\bar w \quad \text{on}\quad \underline\C^2\to M\]
 is spin if and only if 
\begin{equation}\label{spinalong}\tfrac{2\tau}{\pi i}\chi\in \mathbb Z\oplus\tau\mathbb Z.\end{equation}
 We first show that $\tfrac{2\tau}{\pi i}\chi|_\mathcal S$ is not real valued, i.e., that its imaginary part never vanishes.  Note that for
 $\tau_{spec}\in i\R^{>1}$ one component of $\mathcal S$ is contained in  $$\{x+iy\mid 0<x<1;\; 0<y<\tfrac{\tau_{spec}}{2}\}.$$
Due the properties of the Weierstrass $\zeta$-function for rectangular lattices, the function $\tfrac{2\tau}{\pi i}\chi$ has real values along the real line and along the line $\tfrac{\tau_{spec}}{2}+\R$. Moreover,  
$$\tfrac{2\tau}{\pi i}\chi(\xi) \in i \R^{>0}\quad \text{ for } \quad \xi \in \{i y\mid 0<y<\tfrac{\tau_{spec}}{2}\}$$
 and 
$$ \tfrac{2\tau}{\pi i}\chi(\xi)\in 2+i \R^{>0}   \quad \text{ for } \quad \xi \in \{1+i y\mid 0<y<\tfrac{\tau_{spec}}{2}\}.$$ 
Let
  $$\mathcal R := \{\xi \in \{x+iy\mid 0<x<1;\; 0<y<\tfrac{\tau_{spec}}{2}\}\mid \tfrac{2\tau}{\pi i}\chi(\xi) \in  \R\}.$$
Since the function $\tfrac{2\tau}{\pi i}\chi$ has only simple poles and its critical points (which are all
of order one) are contained in $\mathbb Z+i\R$, we have 
$$\mathcal R \cap \del  \{x+iy\mid 0<x<1;\; 0<y<\tfrac{\tau_{spec}}{2}\} = \emptyset.$$
Therefore, $\mathcal R$ is a closed submanifold in 
$$\{x+iy\mid 0<x<1;\; 0<y<\tfrac{\tau_{spec}}{2}\}\subset \C.$$
If $\mathcal R$ would be non-empty, it contains a critical point of $\tfrac{2\tau}{\pi i}\chi$ giving a contradiction. 

By \eqref{def-chi} the map $\tfrac{2\tau}{\pi i}\chi$ does not depend on $\tau,$ its real part is real analytic and non-constant on $\mathcal S$. Since the imaginary part of $\tfrac{2\tau}{\pi i}\chi$ never vanishes, the set of $\tau \in i\R^{>0}$ for which \eqref{spinalong} holds can only be discrete. 
\end{proof}
\begin{lemma}\label{no-unit-spin}
With the notations of Theorem \ref{torus-spec-data} and for  $\tau\in i\R^{>0}$ small enough, the line bundle connection
\[d-\chi(\xi)d\bar w+\alpha(\xi) dw\] 
is never a non-trivial spin connection on the torus $M$ for all $\xi \in \lambda^{-1}(D_1).$
\end{lemma}
\begin{proof}

Using the reality condition \eqref{def-alpha} of the spectral data we only have to show this property  for $\xi$ lying in one of the two connected components of $\lambda^{-1}(D_1)\subset  \C/\Lambda  \cong \Sigma$. Without loss of generality we thus consider the connected component $\lambda^{-1}(D_1)$ containing the real axis in the following. The proof of Lemma \ref{no-spin} then gives a unique $\xi_0 \in \lambda^{-1}(D_1)$  such that $ \bar\partial-\chi(\xi_0)dw$ is a non-trivial spin structure. This $\xi_0$ is shown to be real and furthermore $\xi_0 \in [0, \tfrac{1}{2}]$.

Restricted to the real axis, the function $\chi$ is monotonic increasing, and the function $\alpha$ is monotonic decreasing.  
To be a spin connection, the connection
\[d-\chi(\xi)d\bar w+\alpha(\xi) dw\] 
must have $\pm 1$ monodromy. But since 
the connection is trivial for $\xi= \tfrac{1}{2}$, i.e., has monodromy $1$, the connection $d^{\chi(\xi_0),\alpha(\xi_0)}$  cannot be spin.
\end{proof}

\section{Deformations}\label{SecConstruction}
In this section we adjust the generalised Whitham flow in \cite{HeHeSch} to our spectral data $(\Sigma, \lambda, \chi, \alpha).$
 We start with  describing useful coordinate systems on the moduli spaces of regular singular connections on  a 4-punctured sphere. 

\subsection{Abelianization of flat connections}$\;$\\
We first recall the constructions of \cite{HeHe}, see also \cite[$\S 3.1$]{HeHeSch}.
Let $M=\C/\Gamma$ where $ \Gamma=2\mathbb Z+2\tau\mathbb Z$ with $\tau\in i\R^{>0}$ is a rectangular lattice. 
 Let $\sigma \colon M \to \C P^1$ be the elliptic involution $[w]\mapsto[-w]$ and let \begin{equation}\label{z}z\colon M\to\C P^1\end{equation}
 be the induced double covering which has four ramification points   \begin{equation}\label{rampe}P_1=[0],\,P_2=[1],\,P_3=[1+\tau],\,P_4=[\tau]\in M.\end{equation}
After a Moebius transformation 
the four branch points $p_k\in \C P^1$ of $z$
can be chosen without loss of generality to be
 \begin{equation}\label{brampe}
 p_1=z([0])=0,\, p_2=z([1])=1,\, p_3=z([1+\tau])=\infty,\, p_4=z([\tau])=m\end{equation}
 for some $m\in\C\setminus\{0,1\}.$
 For $\rho\in]-\tfrac{1}{2},\tfrac{1}{2}[$  consider the moduli space 
 \[\mathcal M^2_\rho(\C P^1\setminus\{p_1,\dots,p_4\})\] 
 of flat $\SL(2,\C)$ connections on the 4-punctured sphere $\C P^1\setminus\{p_1,\dots,p_4\}$ such that the local monodromies  around every puncture $p_k$ lie in  the conjugacy class of
\begin{equation}\label{local_monodromies}
\begin{pmatrix}
\exp{(2\pi i\frac{2\rho+1}{4})} & 0 \\ 
0 & \exp{(-2\pi i\frac{2\rho+1}{4})} \end{pmatrix}.
\end{equation}

For convenience of the reader  we shortly describe how $\mathcal M^2_\rho$ can be parametrized.
Consider the lattice $\tfrac{1}{2}\Gamma=\Z+\tau\Z$ and
the corresponding theta-function $\vartheta\colon\C\to\C$ of $\tfrac{1}{2}\Gamma$  uniquely determined (up to a multiplicative constant) by $\vartheta(0) = 0$ and
\begin{equation}\label{theta-function}
\vartheta(w+1) = \vartheta (w),\,\,  \vartheta(w+ \tau) = - \vartheta (w)e^{-2\pi i w}\end{equation}
for all $w\in\C.$
For $x \in\C \setminus\tfrac{1}{2}\Gamma$ fixed define 
\begin{equation}\label{beta-function}
\beta_{x}(w) = \frac{\vartheta(w-  x)}{\vartheta(w)}e^{\tfrac{2\pi i }{\bar\tau-\tau} x(w-\bar w)}.\end{equation}
The function $\beta_x$ is doubly periodic in $w$ with respect to the lattice  $\tfrac{1}{2}\Gamma$
and satisfies 
\[\left(\dbar-\frac{2\pi i}{\tau-\bar\tau}xd\bar w \right)\beta_{x}=0.\] Thus $\beta_x$ is a meromorphic section of the trivial bundle $\underline\C\to\C/\tfrac{1}{2}\Gamma$ equipped with the holomorphic structure $\dbar-\frac{2\pi i}{\tau-\bar\tau}xd\bar w.$  It has a simple zero at $w=x$ and a first order pole  at $w = 0.$  
By pull-back we can also consider $\beta_x$ on the fourfold cover $M = \C/ \Gamma\to \C/\tfrac{1}{2}\Gamma$  as a meromorphic section with simple poles at the half lattice points. \\

Let $\rho\in]-\tfrac{1}{2},\tfrac{1}{2}[.$
For a given flat $\C^*$-connection
\begin{equation}\label{linebundle_connection1form}
d^{\chi,\alpha}=d+\alpha dw-\chi d\bar w
\end{equation}
with $\chi\in\C\setminus(\frac{\pi i}{\tau-\bar\tau}\Z+\frac{\pi i\tau}{\tau-\bar\tau}\Z)$, 
and $\alpha\in\C$ let $x=\frac{\tau-\bar\tau}{2\pi i}\chi$  and define the flat singular connection $^\rho\hat\nabla^{\chi,\alpha}$
 on the trivial rank $2$ bundle
$\underline\C^2\to M$: 
 \begin{equation}\label{connection1form}
^\rho\hat\nabla^{\chi,\alpha}=d+ \begin{pmatrix}-\chi d\bar w + \alpha dw & \rho\frac{\vartheta'(0)}{\vartheta(-2x) }\beta_{2x}(w) dw \\ \rho\frac{\vartheta'(0)}{\vartheta(2x)}\beta_{-2x}(w) dw & \chi d\bar w -\alpha  d w  \end{pmatrix}.
 \end{equation}
 For $\rho=0$ we obtain smooth and totally reducible $\SL(2,\C)$-connections on the torus $M.$
The off-diagonal part of the connection 1-from in \eqref{connection1form} only depends on $\rho$ and on the holomorphic structure $\dbar-\chi d\bar w$ and is independent of $\alpha.$ By \cite[$\S3$]{HeHe}   $^\rho\hat\nabla^{\chi,\alpha}$ is  gauge equivalent 
(via a  gauge transformation with singularities at $P_1,\dots,P_4$) to an invariant connection  $^\rho\tilde\nabla^{\chi,\alpha}$ with respect to $[w]\mapsto[-w]$. In other words, $^\rho\tilde\nabla^{\chi,\alpha}$ is well defined on $\C P^1\setminus\{p_1,\dots,p_4\})$. 
This determines a map 
\begin{equation}\label{DefPi}\Pi\colon \mathcal M_{dR}^1(M)\setminus(\pi^1)^{-1}(\Lambda)\to \mathcal M^2_\rho(\C P^1\setminus\{p_1,\dots,p_4\});\quad 
[d^{\chi,\alpha}]\mapsto[^\rho\tilde\nabla^{\chi,\alpha}],\end{equation}
where
$$\pi^1\colon \mathcal M_{dR}^1(M)\to \mathrm{Jac}(M)$$ 
is the natural projection.
Replacing 
$d^{\chi,\alpha}$
by its dual connection $d^{-\chi,-\alpha}$ in \eqref{connection1form}
the corresponding connection lies in the same gauge equivalence class, i.e., \[\Pi([d^{-\chi,-\alpha}])=\Pi([d^{\chi,\alpha}])=[^\rho\tilde\nabla^{\chi,\alpha}]\]
giving the first assertion of the following theorem.

\begin{theorem}[\cite{HeHe} Theorem 3.4 and Theorem 3.5]\label{2:1} Let  $\rho\in]-\tfrac{1}{2},\tfrac{1}{2}[$. Then  the map  $\Pi$ in \eqref{DefPi}
is a double covering onto an open and dense subset.

 This 2:1 correspondence extends holomorphically to $\chi=\gamma\in\Lambda\equiv \frac{\pi i}{\tau-\bar\tau}\Z+\frac{\pi i\tau}{\tau-\bar\tau}\Z$ 
 along curves $\chi\mapsto(\chi, \alpha(\chi))$ 
 if and only if $\alpha$ expands around $\chi=\gamma$ as
\begin{equation}\label{a_spin_expansion}
\alpha(\chi)\sim_\gamma\pm\frac{4\pi i}{\tau-\bar\tau}\frac{\rho}{\chi-\gamma}+\bar\gamma+\,\text{ higher order terms in } (\chi-\gamma).\end{equation}
 \end{theorem}

\begin{remark}\label{Sign}
The ambiguity of the sign of the residue $\pm\frac{4\pi i \rho}{\tau-\bar\tau}$ in \eqref{a_spin_expansion} is meaningful: If $\rho>0$ the underlying parabolic structure is stable for the $``+"$-sign and unstable otherwise (see \cite[Theorem 3.5]{HeHe} for more details). This is reversed for  $\rho<0$. \end{remark}

In \cite{HeHeSch} the implicit function theorem is used to obtain a family of maps $\chi$ and $\alpha$ depending on $\rho$ satisfying closing conditions. There the most important condition was that the constructed family of flat connections $\nabla^\lambda$ is unitary along the unit circle. In this paper the considered real involution $\eta$ has no fixed points and thus we need a $2$-point construction to realise the reality condition.

\begin{lemma}\label{NSsec}
Let $\dbar_0-\chi_i d\bar z$, $i=1,2,$ be two non-spin holomorphic structures on the rectangular torus $M = \C /\Gamma$. 
Consider 
the flat $SL(2,\C)$-connections
\[^{\rho=0}\hat\nabla^{\chi_1,\bar\chi_2}\quad\text{ and }\quad ^{\rho=0}\hat\nabla^{\chi_2,\bar\chi_1}\]
given by \eqref{connection1form}.
Then, there is an open neighbourhood $U_\rho \subset \R$ of $\rho=0$, and open neighbourhoods $U_{\chi_i}$, $U_{\bar \chi_i} \subset \C$ of
$\chi_i$, and $\bar \chi_i$ respectively, with the property that for all $\rho\in U_\rho$,
$\tilde\chi_1\in U_{\chi_1}$ and $\tilde\chi_2\in U_{\chi_2}$ there are unique
\[\tilde\alpha_i=\tilde\alpha_i(\rho,\tilde \chi_1,\tilde \chi_2)\in U_{\bar \chi_i}\]
for $i=1,2$ such that
\[^\rho\hat\nabla^{\tilde\chi_1,\tilde\alpha_1}\;\;\;\text{ and }\;\;\;\; \overline{^\rho\hat\nabla^{\tilde\chi_2,\tilde\alpha_2}}\]
are gauge equivalent on $M\setminus\{P_1,..,P_4\}.$ Moreover, $\tilde\alpha_i=\tilde\alpha_i(\rho,\tilde \chi_1,\tilde \chi_2)$ is real analytic in its parameters.

\end{lemma}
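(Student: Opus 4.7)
The plan is to apply the real-analytic implicit function theorem to an obstruction map built from Theorem \ref{2.1} and Remark \ref{crucial-remark1}.

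First, I would verify the gauge equivalence at the base point $(\rho,\tilde\chi_1,\tilde\chi_2,\tilde\alpha_1,\tilde\alpha_2)=(0,\chi_1,\chi_2,-\bar\chi_2,-\bar\chi_1)$. At $\rho=0$ the connection $\nabla^{\chi_2,-\bar\chi_1}$ is diagonal, and a direct calculation of the conjugate of the connection $1$-form gives $\overline{\nabla^{\chi_2,-\bar\chi_1}}=\nabla^{-\chi_1,\bar\chi_2}$, which is gauge equivalent to $\nabla^{\chi_1,-\bar\chi_2}$ via the sign-swap gauge $\bigl(\begin{smallmatrix}0&1\\-1&0\end{smallmatrix}\bigr)$. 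Pulling back to $M$ preserves gauge equivalence, so the required identity holds at the base point.

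Second, I would set up the obstruction map. Since complex conjugation preserves the local-monodromy conjugacy class at the puncture $o$, Theorem \ref{2.1} gives, for every $(\rho,\tilde\chi_2,\tilde\alpha_2)$ near the base point, a unique pair $(\tilde\chi_2'(\rho,\tilde\chi_2,\tilde\alpha_2),\tilde\alpha_2'(\rho,\tilde\chi_2,\tilde\alpha_2))$ in a neighborhood of $(\bar\alpha_2,\bar\chi_2)=(-\chi_1,\bar\chi_2)$ such that $\overline{{}^\rho\nabla^{\tilde\chi_2,\tilde\alpha_2}}$ is gauge equivalent to ${}^\rho\nabla^{\tilde\chi_2',\tilde\alpha_2'}$; this pair is obtained by tracking the sheet of the 2:1 covering of Theorem \ref{2.1} through the base point, and since the base point avoids the spin locus it depends real analytically on its arguments with $\tilde\chi_2'|_{\rho=0}=\bar{\tilde\alpha}_2$ and $\tilde\alpha_2'|_{\rho=0}=\bar{\tilde\chi}_2$. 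Applying Remark \ref{crucial-remark1} to $^\rho\hat\nabla^{\tilde\chi_1,\tilde\alpha_1}$ and $^\rho\hat\nabla^{\tilde\chi_2',\tilde\alpha_2'}$, and choosing the branch of the sign ambiguity compatible with the base point and neighborhoods small enough to suppress the period-lattice ambiguity of the line bundle moduli of $M$, the gauge equivalence on $M$ is equivalent to the vanishing of
\[
F(\rho,\tilde\chi_1,\tilde\chi_2,\tilde\alpha_1,\tilde\alpha_2):=\bigl(\tilde\chi_1+\tilde\chi_2'(\rho,\tilde\chi_2,\tilde\alpha_2),\;\tilde\alpha_1+\tilde\alpha_2'(\rho,\tilde\chi_2,\tilde\alpha_2)\bigr)\in\C^2.
\]

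Third, I would compute the real-linear Jacobian $\partial F/\partial(\tilde\alpha_1,\tilde\alpha_2)$ at the base point. Using $\tilde\chi_2'|_{\rho=0}=\bar{\tilde\alpha}_2$ and $\tilde\alpha_2'|_{\rho=0}=\bar{\tilde\chi}_2$, one finds $\partial_{\tilde\alpha_1}F=(0,\mathrm{Id})$ while $\partial_{\tilde\alpha_2}F=(\text{complex conjugation},0)$, viewed as real-linear maps $\C\to\C^2$. Thus the Jacobian is block-anti-diagonal on $\C^2\cong\R^4$ with blocks given by the real-linear identity on $\C$ and complex conjugation on $\C$, each of which is invertible (determinants $\pm 1$). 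The real-analytic implicit function theorem then produces real-analytic functions $\tilde\alpha_i=\tilde\alpha_i(\rho,\tilde\chi_1,\tilde\chi_2)$ on a neighborhood of $(0,\chi_1,\chi_2)$ solving $F=0$, which is exactly the desired gauge equivalence.

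The main subtlety is establishing the real-analytic dependence of $(\tilde\chi_2',\tilde\alpha_2')$ on $(\rho,\tilde\chi_2,\tilde\alpha_2)$, that is, constructing a smooth local section of the 2:1 covering of Theorem \ref{2.1} that propagates the base-point branch through the $\rho$-deformation. Because the base-point holomorphic structures are non-spin and the moduli space is smooth there, such a section exists and is real analytic, so $F$ is real analytic and the implicit function argument above goes through.
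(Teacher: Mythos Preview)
Your proposal is correct and follows essentially the same approach that the paper indicates: the paper does not give a detailed proof but simply states that the lemma follows from the implicit function theorem together with Theorem~\ref{2.1} and Remark~\ref{crucial-remark1}, and your argument is a careful unpacking of precisely that strategy. Your verification of the base point, the construction of the obstruction map $F$ via the local section of the $2:1$ correspondence, and the block-anti-diagonal Jacobian computation are all in line with what the paper intends.
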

The proof is a direct application of the implicit function theorem (at $\rho=0$) to Theorem \ref{2:1} , compare also with the proof of \cite[Lemma 3.2]{HeHeSch}.
\begin{remark}\label{NSsecRem}
The statement  of the lemma also holds
for the flat $\SL(2,\C)$-connections
\[^{\rho=0}\hat\nabla^{\chi_1,\bar\chi_2}\quad\text{ and }\quad ^{\rho=0}\hat\nabla^{\chi_2+\gamma,\bar\chi_1+\bar \gamma}\]
if $\gamma\in\Lambda\equiv \frac{\pi i}{\tau-\bar\tau}\Z+\frac{\pi i\tau}{\tau-\bar\tau}\Z$ is the lattice of Jac$(M).$
Therefore, the lemma can be rephrased in terms of the moduli space of flat line bundle connections on $M$.
\end{remark}

\subsection{The generalised Whitham flow}$\;$\\
For $\rho = 0$ consider spectral data $(\Sigma, \lambda, \chi, \alpha)$ of Theorem \ref{torus-spec-data} and the corresponding family of flat connections  via \eqref{connection1form}.  
The generalised Whitham flow is the deformation of spectral data (and of the resulting harmonic map) given by
varying $\rho$ while preserving the reality condition and the translational periods  \eqref{choiceab1} and \eqref{choiceab2} of $\chi$ and $\alpha$.
Hence, for some $d>1$, we are looking for deformations $\tilde \chi$ and $\tilde \alpha$ of $\chi $ and $\alpha$ of the form
\[\tilde\chi=\chi+\hat x\quad\text{ and }\quad \tilde\alpha=\alpha+\hat a,\]
where 
\[\hat x\colon \lambda^{-1}(\{\lambda\in\C\mid \lambda\bar\lambda<d\})\subset \Sigma\to \C\]
and
\[\hat a\colon \lambda^{-1}(\{\lambda\in\C\mid \tfrac{1}{d}<\lambda\bar\lambda<d\})\subset \Sigma\to \C\]
are odd (with respect to the involution $\sigma$ on $\Sigma$) holomorphic maps. In order to study these maps we first recall that the spectral curve $\Sigma$ in the torus case 
($\rho=0$, Theorem \ref{torus-spec-data})
is given
by the algebraic equation
\begin{equation}\label{ylambda}y^2=\lambda(\lambda-r)(\lambda+\tfrac{1}{r})\end{equation}
for some $0<r<1$. Then $y\colon\Sigma\to\C$ is an odd meromorphic function on $\Sigma$. On the domains 
$$\lambda^{-1}(\{\lambda\in\C\mid \lambda\bar\lambda<d\})\subset \Sigma\quad
\text{ and }\quad\lambda^{-1}(\{\lambda\in\C\mid \tfrac{1}{d}<\lambda\bar\lambda<d\})\subset \Sigma$$
of $\hat x$ and of $\hat a$, respectively, the function $y$ has no poles. Therefore, $\hat x$ and $\hat a$
can be written as
\[\hat x=y\underline x:= y\underline x\circ\lambda\;\;\; \text{ and }\;\;\;  \hat a=y\underline a:=y \underline a\circ\lambda,\]
for holomorphic functions
\[\underline x\colon \{\lambda\in\C\mid \lambda\bar\lambda<d\}\to \C\]
and 
\[\underline a\colon \{\lambda\in\C\mid \tfrac{1}{d}< \lambda\bar\lambda<d\}\to \C.\]

For $d>1$ fixed, let $\mathcal B_d$ be the Banach space of bounded holomorphic functions
on $$D_d:=\{\lambda\in\C\mid \lambda\bar\lambda<d\}$$
equipped with the supremum norm. Note that $\chi|_{\mathcal S}$ is a well-defined bounded map from the compact set $\mathcal S=\lambda^{-1}(S^1)$ into 
$\mathrm{Jac}(M)$. Let \[\chi_1=\chi(\xi) \quad \text{ and }\quad\chi_2 = \chi \left(\bar \xi+\tfrac{\tau_{spec}+1}{2}\right).\] Then
 there exist by Lemma \ref{NSsec} and Remark \ref{NSsecRem}   a 
$\delta>0$  and an open neighbourhood $\mathcal U_0\subset \mathcal B_d$ of the zero function  such that for 
all $\xi\in \mathcal S$, $\rho \in (-\delta, \delta)$,  and all
 $\hat x=y\underline x\circ\lambda$ with $\underline x\in \mathcal U_0 $  \[\chi(\xi)+\hat x(\xi) \in U_{\chi_1} \quad \text{ and }\quad \chi(\bar \xi+\tfrac{\tau_{spec}+1}{2})+\hat x(\bar \xi+\tfrac{\tau_{spec}+1}{2}))\in U_{\chi_2}.\]
In particular,
there is a real
analytic function
\begin{equation}\label{underlineA}
\alpha^\rho_{\underline x}\colon \mathcal S \to\C\end{equation}
such that
\[^\rho\nabla^{\chi(\xi)+\hat x(\xi),\alpha^\rho_{\underline x}(\xi)}\;\;\;\text{ and }\;\;\;\; \overline{^\rho\nabla^{\chi(\bar \xi+\tfrac{\tau_{spec}+1}{2})+\hat x(\bar \xi+\tfrac{\tau_{spec}+1}{2}),\alpha^\rho_{\underline x}(\bar \xi+\tfrac{\tau_{spec}+1}{2})}}\]
are gauge equivalent on $M$ for all $\xi\in \mathcal S .$\\

It also follows from  Lemma \ref{NSsec} and Remark \ref{NSsecRem} that for $\underline x\in\mathcal U_0\subset \mathcal B_d$ the map
$\alpha^\rho_{\underline x}$ has the same translational periods as $\alpha.$
Thus, for any $\underline x\in \mathcal U_0$  and any $\rho\in(-\delta, \delta)$ the function
\[\alpha^\rho_{\underline x}-\alpha\]
is real analytic and single-valued on both components $\mathcal S_1$ and $\mathcal S_2$ of $\mathcal S .$
Hence $\alpha^\rho_{\underline x}-\alpha$
is a holomorphic function defined on an open neighbourhood of $\mathcal S $ and odd by uniqueness in Lemma \ref{NSsec}.
Altogether, up to choosing $d>1$ smaller, $\alpha^\rho_{\underline x}$
 determines a holomorphic function
\[\underline a^\rho_{\underline x}\colon A_{d}:= \{\lambda\in\C\mid\tfrac{1}{d}<\lambda\bar\lambda<d\}\longrightarrow\C\]
satisfying  
\begin{equation}\label{underlineA2}
\alpha^\rho_{\underline x}-\alpha=y \underline a^\rho_{\underline x}\circ\lambda.
\end{equation}
for all $\xi \in \mathcal S$. 
In this vain,   let $\mathcal B_{\tfrac{1}{d},d}$  the Banach space of bounded holomorphic functions on the annulus
$A_d.$

\begin{lemma}\label{smoothB}

Let $\tau_{spec},\tau$ be as in Lemma \ref{no-spin}.
Then there exist $d>1,$ $\delta>0$ and an open neighbourhood $\mathcal U_0 \subset \mathcal B_d$ of the zero function
such that for every $\rho \in (-\delta, \delta)$ and every $\underline x\in\mathcal U_0$
the function
$\underline a^\rho_{\underline x}$ defined by \eqref{underlineA2} is bounded and holomorphic on $A_ d.$ Moreover, the map
\[(\rho,\underline x)\in(-\delta,\delta) \times\mathcal U_0\longmapsto \underline a^\rho_{\underline x}\in\mathcal B_{\tfrac{1}{d},d}\]
is smooth.
\end{lemma}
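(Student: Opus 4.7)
The plan is to apply a Banach space implicit function theorem, using the pointwise existence and invertibility supplied by Lemma \ref{NSsec}. The key observation is that the defining gauge equivalence condition
\[
{}^\rho\hat\nabla^{\tilde\chi_1,\tilde\alpha_1}\sim\overline{{}^\rho\hat\nabla^{\tilde\chi_2,\tilde\alpha_2}}
\]
can be encoded through equality of trace-of-monodromy functions $T(\tilde\chi_1,\tilde\alpha_1,\rho)=\overline{T(\tilde\chi_2,\tilde\alpha_2,\rho)}$, and this is a complex-analytic equation when the four variables $\tilde\chi_1,\tilde\alpha_1,\bar{\tilde\chi}_2,\bar{\tilde\alpha}_2$ are viewed as independent complex parameters (and real-analytic in $\rho$).

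First I would shrink $d>1$ if necessary so that $\overline{A_d}$ excludes the branch points $r,-1/r$ of $\lambda\colon\Sigma\to\C P^1$, ensuring that $y\circ\xi$ is bounded and non-vanishing on $\lambda^{-1}(A_d)$. For $\lambda\in A_d$, both preimages $\xi$ and $\bar\xi+\tfrac{1+\tau}{2}$ lie in $\lambda^{-1}(D_d)$, so
\[
\tilde\chi_1(\lambda)=\chi(\xi)+y(\xi)\,\underline x(\lambda)
\]
is bounded holomorphic on $A_d$, while
\[
\overline{\tilde\chi_2(\lambda)}=\overline{\chi\!\left(\bar\xi+\tfrac{1+\tau}{2}\right)}+\overline{y\!\left(\bar\xi+\tfrac{1+\tau}{2}\right)}\,\overline{\underline x(-\bar\lambda^{-1})}
\]
is also bounded holomorphic on $A_d$, since the complex conjugate of an antiholomorphic-in-$\lambda$ function is holomorphic. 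These are the inputs to the eventual implicit function theorem.

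Next I would carry out the Banach IFT. Let $W\subset\mathcal B_{\tfrac{1}{d},d}$ be an open neighborhood of $0$, and define
\[
\mathcal F\colon(-\delta,\delta)\times\mathcal U_0\times W\longrightarrow Y
\]
(with $Y$ an appropriate target Banach space of bounded holomorphic vector-valued functions on $A_d$, cutting out $\mathcal M^\rho\subset\C^3$) by evaluating the gauge-equivalence residual pointwise in $\lambda$, where $\tilde\alpha_i(\lambda)$ is formed from $\alpha$ together with $y\,\underline a\circ\lambda$ at the appropriate preimage. Smoothness of $\mathcal F$ as a Banach map follows from standard estimates for composition with the entire trace functions $T$. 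The partial derivative $D_{\underline a}\mathcal F|_{(0,0,0)}$ is multiplication by the pointwise Jacobian from Lemma \ref{NSsec}, which by the previous paragraph is itself a bounded holomorphic function on $A_d$ and invertible at the base point; after possibly shrinking $\delta$ and $\mathcal U_0$, this defines an isomorphism of the relevant Banach spaces. The Banach IFT then produces the unique $\underline a^\rho_{\underline x}\in W$ solving $\mathcal F=0$, with smooth dependence on $(\rho,\underline x)$; uniqueness matches the original near-$S^1$ construction.

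The main obstacle I anticipate is promoting the pointwise invertibility from Lemma \ref{NSsec} to a uniform Banach-space invertibility on the \emph{non-compact} annulus $A_d$. This is precisely where the preparatory shrinking of $d$ and the exclusion of branch points and spin loci (via Lemma \ref{no-spin}) matter: one must verify that the pointwise inverse Jacobian extends to a bounded holomorphic function on $A_d$, so that multiplication by it defines a continuous operator on $Y$. A secondary subtlety is the coupling between values of $\underline a$ at $\lambda$ and at $-\bar\lambda^{-1}$ that is built into $\mathcal F$ by the antipodal reality structure; this is handled naturally because $\mathcal B_{\tfrac{1}{d},d}$ is preserved under the antipodal involution and $\mathcal F$ is designed to respect this symmetry.
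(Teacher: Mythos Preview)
Your Banach implicit function theorem approach is a legitimate alternative, but the paper takes a more direct route that avoids re-doing the IFT at the infinite-dimensional level. The paper observes that since Lemma~\ref{NSsec} already produces $\tilde\alpha_1(\rho,\chi_1,\chi_2)$ as a \emph{real-analytic} function of its finite-dimensional arguments, one can complexify: there is a bounded holomorphic function $F$ on an open set $U\subset\C\times\C^4$ such that $F(\rho,\chi_1,\bar\chi_1,\chi_2,\bar\chi_2)=\tilde\alpha_1(\rho,\chi_1,\chi_2)-\tilde\alpha_1(0,\chi_1,\chi_2)$. Then $\underline a^\rho_{\underline x}\circ\lambda$ is simply the composition of $F$ with the evaluation maps $\xi\mapsto(\chi+\hat x)(\xi)$ and their conjugates at the antipodal point, and smoothness follows from the standard fact that composition with a bounded holomorphic function of several variables is a smooth operation on Banach spaces of bounded holomorphic functions.

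The difference in practice: your approach requires you to verify that the pointwise Jacobian and its inverse are bounded holomorphic on $A_d$ and that the resulting multiplication operator is a Banach isomorphism (the obstacle you yourself flag), whereas the paper sidesteps this entirely by taking the \emph{output} of the pointwise IFT as a given holomorphic function and composing. Your route would work once those estimates are supplied, but it duplicates work already absorbed into Lemma~\ref{NSsec}; the paper's composition argument is shorter and requires no new invertibility check.
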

\begin{proof}
Consider the spectral curve $\Sigma=\C/\Lambda$ and its holomorphic maps $\lambda$ and $y$ to $\C P^1$ as in \eqref{ylambda}. 
By the assumptions of Theorem \ref{torus-spec-data}  there are no branch points  over $\lambda\in S^1 \subset\C P^1$.
Therefore,  the space of holomorphic functions $\mathcal B_{\tfrac{1}{d},d}$ from a small open annulus $A_d$ of $S^1$ can be identified 
with the space of odd holomorphic functions $\hat{\mathcal B}_{\tfrac{1}{d},d}$ from the corresponding neighbourhood $\lambda^{-1}(U)$ of $\mathcal S \subset \Sigma$ by multiplying with $y$.
Hence the map \[(\rho,\underline x)\in(-\delta,\delta) \times\mathcal U_0\longmapsto \underline a^\rho_{\underline x}\in\mathcal B_{\tfrac{1}{d},d}\]
is smooth if the map
\[(\rho,\underline x) \mapsto y \cdot a^\rho_{\underline x} \circ \lambda = \alpha^\rho_{\underline x}-\alpha  \]
to the Banach space $\hat{\mathcal B}_{\tfrac{1}{d},d}$ is smooth. To show this we construct a holomorphic function $F$ and a smooth map $h$ such that $$ y \cdot a^\rho_{\underline x} \circ \lambda = F \circ h(\rho, \underline x).$$

For $\tau_{spec}\in i\R^{>1}$ we decompose $\mathcal S=\mathcal S_1\cup \mathcal S_2$ into its two connected components.
Since we are working with odd maps, it suffices to study the behaviour of the involved functions on one connected component $\mathcal S_1$ or $\mathcal S_2$ only.
Denote by $\hat {\mathcal S}_i$ the preimage of $\mathcal S_i$ with respect to $\C\mapsto\C/\Lambda=\Sigma.$
Moreover, let
$$W:=\{0\}\times\left \{\left (\chi(\xi),\overline{\chi(\xi)},\chi(\bar\xi+\tfrac{\tau_{spec}+1}{2}),\overline{\chi(\bar \xi+\tfrac{\tau_{spec}+1}{2})} \right)\mid \xi\in \hat {\mathcal S}_1 \right \}\subset \C^5.$$

By analytic continuation there is a holomorphic function from an open neighbourhood $\hat W$ of $W$
$$F\colon \hat W\to\C$$
which is uniquely determined by 
\[F(\rho,\chi_1,\bar\chi_1,\chi_2,\bar\chi_2)=\tilde\alpha_1(\rho, \chi_1,\chi_2)-\tilde\alpha_1(0, \chi_1,\chi_2)\]
for all $(\rho,\chi_1,\bar\chi_1,\chi_2,\bar\chi_2)\in W,$ where $\tilde \alpha_1(\rho, \chi_1, \chi_2)$ are given by Lemma \ref{NSsec}.
Let 
$$\eta(\xi)=\bar\xi+\tfrac{1+\tau_{spec}}{2}$$
be the lift of the real structure constructed in Theorem \ref{torus-spec-data}
 and let $n\colon V\subset\C \to V$ be the holomorphic map determined by $n_{\mid \hat {\mathcal S}_1}=\eta_{\mid \hat {\mathcal S}_1}^{-1}$ for some open neighbourhood  $V$ of $\hat {\mathcal S}_1$  via analytic continuation. Note the map $n$ exists on $V$ because 
$\lambda|_{\mathcal S_k}: \mathcal S_k \rightarrow S^1$ is bijective for $k=1,2$. Then,

\begin{equation}
\begin{split}
h\colon\C\times {\mathcal B}_d&\to\C\times (\hat{\mathcal B}_{\tfrac{1}{d},d})^4\\(\rho,\underline x)&\mapsto
(\rho,\chi+\underline x\circ\lambda,\overline{(\chi+\underline x\circ\lambda)(\eta\circ n)},(\chi+\underline x\circ\lambda)\circ n,\overline{(\chi+\underline x\circ\lambda)\circ \eta })
\end{split}
\end{equation}
is smooth and we have by definition \eqref{underlineA2}
\begin{equation}\label{bigF}
\begin{split}
&y(\xi) \,\underline a^\rho_{\underline x}\circ\lambda(\xi)=F\circ h(\rho,\underline x),
\end{split}
\end{equation}
proving the statement.
\end{proof}
\begin{lemma}\label{lastlemma}
Using the notations of Lemma \ref{no-spin} and $r=\lambda([\tfrac{1}{2}])$. 
Then, there exist  $d>1$, $\delta>0$ and  an open set $$U=\lambda^{-1}\left(\{\lambda\in\C\mid 0<\lambda\bar\lambda<d; \lambda\neq r\} \right)$$ 
such that there are two
 smooth families of odd holomorphic maps
\[\rho\in(-\delta, \delta)\longmapsto \left( (\chi^\rho_+,\alpha^\rho_+)\colon U\subset \Sigma\longrightarrow \mathcal M_{dR}^1(M) \right)\]
and
\[\rho\in(-\delta, \delta)\longmapsto \left( (\chi^\rho_-,\alpha^\rho_-)\colon U\subset \Sigma\longrightarrow \mathcal M_{dR}^1(M) \right)\]
into the moduli space of flat line bundle connections $\mathcal M_{dR}^1(M)$ satisfying
\begin{enumerate}
\item for $\rho=0$ $(\chi^0_+,\alpha^0_+) = (\chi^0_-,\alpha^0_-)$ are the  spectral data in Theorem \ref{torus-spec-data};
\item for all $\rho\in(-\delta, \delta)$ the maps $\chi^\rho_\pm\colon U\to \mathrm{Jac}(M)$ extend holomorphically through $[0]=\lambda^{-1}(0) $ and $[\tfrac{1}{2}]=\lambda^{-1}(r)\in\Sigma$;
\item $\alpha^\rho_\pm$ has first order poles at $[0],[\tfrac{1}{2}]\in\Sigma$, and the expansion of the pole at $\lambda = [\tfrac{1}{2}]$ satisfies  \eqref{a_spin_expansion} with the respective sign; 
\item for all $\rho\in(-\delta,\delta)$ and for all $\xi\in \mathcal S$ the connections
\[^\rho{\hat\nabla^{\chi^\rho_\pm(\xi),\alpha^\rho_\pm(\xi)}}\;\;\;\text{ and }\;\;\;\overline{^\rho{\hat\nabla^{\chi^\rho_\pm\left(\bar\xi+\tfrac{1+\tau_{spec}}{2}\right), \; \alpha^\rho_\pm\left(\bar\xi+\tfrac{1+\tau_{spec}}{2}\right)}}}\]
are gauge equivalent on the punctured torus  $M\setminus\{P_1,..,P_4\}$.
\end{enumerate}
\end{lemma}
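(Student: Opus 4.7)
The plan is to apply the implicit function theorem to construct $(\chi^\rho_\pm, \alpha^\rho_\pm)$ as perturbations of the spectral data $(\chi, \alpha)$ from Section \ref{sec2}. Writing the deformation as $\tilde\chi = \chi + y\,\underline{x}\circ\lambda$ for $\underline{x} \in \mathcal{U}_0 \subset \mathcal{B}_d$, Lemma \ref{smoothB} already supplies a smooth assignment $(\rho, \underline{x}) \mapsto \underline{a}^\rho_{\underline{x}} \in \mathcal{B}_{1/d, d}$ producing the companion function on the annulus, so property $(4)$ holds tautologically once $\alpha^\rho_\pm$ is defined via this formula. What remains is to pick $\underline{x}$ so that $\underline{a}^\rho_{\underline{x}}$ extends holomorphically from $A_d$ to the disc $D_d \setminus \{r\}$ with a first order pole at $\lambda = r$ of precisely the form dictated by \eqref{2:1ext}, the sign $\pm$ selecting one of the two sheets of the $2{:}1$ correspondence of Theorem \ref{2.1}.

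To set this up, I would fix for each sign a meromorphic factor $m_\pm(\lambda, \rho)$ on $D_d$ with a simple pole at $\lambda = r$ whose residue matches the leading term $\pm\tfrac{4\pi i}{\tau-\bar\tau}\tfrac{\rho}{\chi-\gamma}$ prescribed by \eqref{2:1ext}, and then require $\underline{a}^\rho_{\underline{x}} - m_\pm$ to extend holomorphically across $S^1$ into $D_d$. This extension is equivalent to the vanishing of the negative Laurent tail on $S^1$, so the obstruction is encoded by a smooth map
\[
\Psi^\pm : \mathcal{U}_0 \times (-\delta, \delta) \longrightarrow \mathcal{B}^-, \qquad (\underline{x}, \rho) \longmapsto \mathrm{proj}^-\!\bigl(\underline{a}^\rho_{\underline{x}} - m_\pm\bigr),
\]
where $\mathcal{B}^-$ is the Banach space of bounded holomorphic functions on $\{|\lambda| > 1/d\} \cup \{\infty\}$ that vanish at infinity and $\mathrm{proj}^-$ is the Birkhoff/Cauchy projection onto the negative Laurent part. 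The desired deformations are exactly the zeros of $\Psi^\pm$.

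At $(\underline{x}, \rho) = (0, 0)$ we have $\Psi^\pm(0,0) = 0$, since Theorem \ref{realXtorusdata} guarantees that the undeformed spectral data already induces a holomorphic family on $D^*_{1+\epsilon}$ (the $\rho = 0$ case has no residue condition to satisfy). Smoothness of $\Psi^\pm$ is Lemma \ref{smoothB} composed with a continuous linear projection. The hard part will be showing that the partial derivative $D_{\underline{x}} \Psi^\pm$ at $(0,0)$ is a Banach space isomorphism: this amounts to proving that every prescribed negative Laurent tail on $S^1$ is realized by a unique interior deformation $\underline{x}$. I expect this to follow from the irreducibility of the family $\nabla^\lambda$ along $S^1$ (Theorem \ref{theorem26}) together with the explicit theta function description in Section \ref{secthetax}, which identifies the linearization of the reality-correspondence of Lemma \ref{NSsec} with an operator that can be directly inverted by a Birkhoff factorization argument analogous to the one carried out for the CMC Whitham flow in \cite[Section 4]{HeHeSch}.

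Granting the isomorphism, the implicit function theorem produces for each sign a unique smooth curve $\rho \mapsto \underline{x}^\rho_\pm \in \mathcal{U}_0$ with $\underline{x}^0_\pm = 0$, and setting
\[
\chi^\rho_\pm := \chi + y\,\underline{x}^\rho_\pm \circ \lambda, \qquad \alpha^\rho_\pm := \alpha + y\,\underline{a}^\rho_{\underline{x}^\rho_\pm} \circ \lambda
\]
yields the required data. Properties $(1)$ and $(2)$ are immediate, since $\underline{x}^\rho_\pm$ is holomorphic on all of $D_d$ and $y$ extends across the branch points $[0]$ and $[1]$; property $(3)$ is built into $\Psi^\pm$ through the choice of $m_\pm$; and property $(4)$ holds by the construction of $\underline{a}^\rho_{\underline{x}}$ via Lemma \ref{NSsec}.
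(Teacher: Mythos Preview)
Your approach matches the paper's: both invoke the implicit function theorem on the smooth map of Lemma~\ref{smoothB}, in direct analogy with \cite[Theorem~4.2]{HeHeSch}. The paper's proof is in fact nothing more than a pointer to that analogy, observing only that dropping the extrinsic closing condition of \cite{HeHeSch} frees up enough parameters to keep the branch point $r$ fixed.

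The one place your sketch overshoots is the justification of the linearization isomorphism. You anticipate needing irreducibility along $S^1$ and the theta-function machinery of Section~\ref{secthetax}, but at $\rho=0$ the correspondence of Lemma~\ref{NSsec} collapses to $\tilde\alpha_1=-\overline{\tilde\chi_2}$ (up to a lattice shift), so the derivative is computable by hand: under the real involution $(y,\lambda)\mapsto(\bar y\,\bar\lambda^{-2},-\bar\lambda^{-1})$ on $\Sigma$, a variation $\chi\mapsto\chi+y\,f(\lambda)$ forces $\alpha\mapsto\alpha-y\,\lambda^{-2}\,\overline{f(-\bar\lambda^{-1})}$. Hence $D_{\underline x}\underline a^0_{\underline x}\big|_{\underline x=0}$ sends a positive Taylor series $f$ to a series supported in degrees $\le-2$, and this is visibly an isomorphism onto that subspace. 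The missing $\lambda^{-1}$-mode is exactly the freedom absorbed by the first-order pole of $\alpha^\rho_\pm$ allowed in condition~(3), and the two sign choices in \eqref{2:1ext} are what produce the two branches $(\chi^\rho_+,\alpha^\rho_+)$ and $(\chi^\rho_-,\alpha^\rho_-)$.
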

\begin{proof}
Using Lemma \ref{NSsec} and Lemma \ref{smoothB}
the proof is analogous to the proof of Theorem 4.2 in \cite{HeHeSch}. It requires first to apply the implicit function theorem to guarantee $(4)$ and $\alpha^\rho_\pm$ having a first order pole in $\lambda= [0].$ The remaining parameters are used to apply the implicit function theorem again to obtain the asymptotic  \eqref{a_spin_expansion}  at $\lambda = [\tfrac{1}{2}].$ Since the residue  in \eqref{a_spin_expansion}  vanishes for $\rho = 0,$ we have a choice of sign here.
The main difference is that in contrast to Theorem 4.2 of \cite{HeHeSch}
no extrinsic closing condition is required.  
This allows us to fix the branch point $r$ of the spectral curve for all $\rho$. 
 \end{proof}
 \begin{remark}\label{flow-identity}
 The two connections $^{\rho}\hat\nabla^{\chi,\alpha}$  and $^{-\rho}\hat\nabla^{\chi,\alpha}$ \eqref{connection1form} are gauge equivalent by a diagonal gauge on the punctured torus $M\setminus\{P_1,..,P_4\}$.
The uniqueness part of the implicit function theorem in the proof of Lemma \ref{lastlemma} and the  sign change in \eqref{a_spin_expansion} then give
 \[(\chi^\rho_+,\alpha^\rho_+)=(\chi^{-\rho}_-,\alpha^{-\rho}_-)\]
 for all $\rho\in(-\delta,\delta).$
 \end{remark}
 
 \section{Existence of  negative real sections}\label{sec:erns}
 In this section we prove the existence of negative, non-admissible, real holomorphic sections in 
 Deligne-Hitchin moduli spaces of Riemann surfaces of high genus. We show that each set of the initial data (Lemma \ref{lastlemma}) give rise to
two $\C^*$-families of flat connections on the 4-punctured 
 sphere (Proposition \ref{hatXfamily}). These families induce real holomorphic sections of the Deligne-Hitchin moduli spaces of certain coverings of the 4-punctured sphere. 
  
\begin{definition}
Let $\rho\in\R.$ The quadruple $(\Sigma, \lambda, \chi, \alpha)$ is called $\rho$-spectral data, if
\begin{itemize}
\item $\Sigma$ is the spectral curve and $\lambda: \Sigma \rightarrow \C P^1$ as in Lemma \ref{lastlemma};
\item $\chi : \tilde \Sigma := \lambda^{-1}(D_d) \subset \Sigma \rightarrow \text{Jac}(M)$ is an  odd holomorphic map for some $d>1$;
\item $(\chi,\alpha) : \tilde \Sigma \subset \Sigma \rightarrow \mathcal M^1_{dR}(M)$ is an odd meromorphic map such that $(\chi,\alpha)$ satisfies the condition \eqref{a_spin_expansion} for $\rho.$
\end{itemize}
 \end{definition}

 Let $M=\C/\Gamma$ be a rectangular torus given by the algebraic equation
$$y^{2}=\frac{(z-z_0)(z-z_1)}{(z-z_2)(z-z_3)}$$
for pairwise distinct points $z_0,..,z_3\in\C$. 
Without loss of generality we assume that the branch points of $z$ are the half-lattice points 
$[0],[1],[1+\tau]$ and $[\tau]$. For $q\in\mathbb N^{>1}$
consider the compact  Riemann surface $M_q$ of genus $q-1$ defined by the algebraic equation
\begin{equation}\label{algEQ}y^{q}=\frac{(z-z_0)(z-z_1)}{(z-z_2)(z-z_3)}.\end{equation}
Naturally, the surface $M_q$  is equipped with  a $q$-fold covering $z$ to the Riemann sphere. If $q$ is even, it further admits a $\tfrac{q}{2}$-fold covering to the torus $M$ given by
\begin{equation}\label{hatpi}\pi_q\colon  M_q\to M; (y,z)\mapsto (y^{\tfrac{q}{2}},z)\end{equation}
 totally branched at $[0],[1],[1+\tau],[\tau]\in M$.

\begin{proposition}\label{hatXfamily}
Let $p,q\in\mathbb N$ be coprime integers with $q$ even and $\rho=-p/q\in(-\tfrac{1}{2},0)$. Let $(\Sigma, \lambda, \chi,\alpha)$ be $\rho$-spectral data provided by Lemma \ref{lastlemma} %
and $\pi_q\colon M_q \to M$ be as in \eqref{hatpi}. Then the $\rho$-spectral data induce a holomorphic family
of smooth flat $\SL(2,\C)$-connections \[\nabla^\lambda,  \quad  \lambda\in D^*_{d} = \{\lambda\mid0<\lambda\bar\lambda<d\}\]
on $M_q$ with asymptotic expansion 
\begin{equation}\label{lambda-extension}
\nabla^\lambda=\lambda^{-1}\Phi+\nabla+\lambda \Phi_1+ \text{ higher order terms in } \lambda\end{equation}
around $\lambda = 0$ such that  $(\dbar^\nabla,\Phi)$ is a stable Higgs pair. Furthermore, under the assumptions of Lemma \ref{no-unit-spin} the connections $\nabla^\lambda$ are irreducible for all $\lambda\in D^*_{d}$ for $\rho\sim0$ small. 
\end{proposition}
\begin{proof}
The $\rho$-spectral data  define a family of connections 
$^\rho\tilde\nabla^{\chi(\xi),\alpha(\xi)}$ (see \eqref{connection1form}) on the $4$-punctured sphere $\C P^1 \setminus \{p_1,...p_4\}$ parametrised by $\xi \in \tilde\Sigma$. 
Because $\chi$ and $\alpha$ are odd maps, we can define
$$\tilde \nabla^\lambda:=\nabla^{\lambda(\xi)}=   \;^\rho\hat\nabla^{\chi(\xi),\alpha(\xi)}$$
via the hyper elliptic involution $\lambda$ on $\Sigma$. 
The condition that $\alpha$ satisfies \eqref{a_spin_expansion} ensures that there exist a global $\lambda$-dependent gauge $g$ such that $\tilde \nabla^\lambda. g$ extends holomorphically through the points $\xi$, where $\chi(\xi)$ is a lattice point.  
The pull-back of $\tilde \nabla^\lambda .g$ to $M_q$ then has apparent singularities in $\pi_q^{-1}(\{p_1, ...,p_4\})$ and is therefore gauge equivalent to a holomorphic family of smooth connections $\nabla^\lambda$ over $M_q$.
As in \cite[Section 3.3]{HeHeSch} the Higgs pair at $\lambda=0$ is stable, see also \cite[Section 2.4]{HeHe}. \\

It remains to show  irreducibility of $\nabla^\lambda$ for all $\lambda\in\C^*$. 
For $\rho \neq 0$ a connection $\tilde \nabla^{\lambda_0}$ on the 4-punctured sphere  is reducible if and only if the corresponding $d-\chi_0d\bar w+\alpha_0 dw$ is a non-trivial spin connection by  \cite[Section 2.4]{HeHe}. 
 Because $M_q\to \C P^1$ is a cyclic covering, the same holds for  $\nabla^\lambda$.
At $\rho = 0$ and under the assumptions of Lemma \ref{no-unit-spin} $d-\chi_0d\bar w+\alpha_0 dw$ is never a non-trivial spin connection.
By continuity of the flow, this remains true for $\rho \sim 0$ giving irreducibility for $\nabla^\lambda$ for $\rho \neq 0$.
\end{proof}

\begin{remark}\label{cp1family}
In the following we use the family $\tilde\nabla^\lambda$ of flat connections on the 4-punctured sphere because these connections are
irreducible for all $\rho$. Therefore, the notion of positive and negative real sections is well-defined and preserved by continuous deformations. The Proposition also holds for $\rho>0$. Due to the stability properties (Remark \ref{Sign}) this implies that
the bundle type at $\lambda=0$ is $\mathcal O(1)\oplus\mathcal O(-1)\to\C P^1$ instead of the trivial holomorphic bundle, see \cite[Section 2.4]{HeHe}.
The conformal conformal Gau\ss \; changes the underlying holomorphic bundle at $\lambda=0$ also to $\mathcal O(1)\oplus\mathcal O(-1)\to\C P^1$ .
\end{remark}

\begin{theorem}\label{MainTheorem}
There is a $g_0\in \mathbb N$ such for every $g> g_0$ there exists a Riemann surface of genus $g$
admitting a negative real holomorphic section in its Deligne-Hitchin moduli space which is not a twistor line.
\end{theorem}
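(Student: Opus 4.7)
I start by fixing $\tau \in i\R^+$ and $\tilde\tau \in i\R^+$ so that Lemma \ref{no-spin} applies, together with the negative real initial family $\hat\nabla^\xi$ from Theorem \ref{realXtorusdata} and its abelianized counterpart on the $4$-punctured sphere from Theorem \ref{theorem26}. Pick coprime integers $p,q\in\mathbb N$ with $2p<q$ and $\rho:=p/q<\delta$, where $\delta>0$ comes from Lemma \ref{lastlemma}. Lemma \ref{lastlemma} then produces deformed spectral data $(\chi^\rho_+,\alpha^\rho_+)$ on the open subset $U\subset\Sigma$ satisfying the extension condition \eqref{2:1ext} at $[1]\in\Sigma$ with the positive sign, and meeting the real symmetry condition (4) along $\R+\tfrac{\tau}{4}$.

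\textbf{Step 2: Produce the higher-genus family of flat connections.}
I feed $(\chi^\rho_+,\alpha^\rho_+)$ into Theorem \ref{hatXfamily}. Letting $\pi\colon\hat M_\rho\to M$ be the $q$-fold branched cover described there, this gives a holomorphic family
\[\lambda\in D^*_{1+\epsilon}\longmapsto\nabla^\lambda\]
of flat $\SL(2,\C)$-connections on the compact Riemann surface $\hat M_\rho$, with the expansion \eqref{lambda-extension} at $\lambda=0$ terminating in a stable Higgs pair, and with $\nabla^{\lambda(\xi)}$ gauge equivalent to $\pi^*(^\rho\nabla^{\chi^\rho_+(\xi),\alpha^\rho_+(\xi)})$. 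A Riemann--Hurwitz count gives $g(\hat M_\rho)\to\infty$ as $q\to\infty$; varying $(p,q)$ (and, if needed, adjusting the covering in Theorem \ref{hatXfamily} by allowing different ramification patterns at $[0],[1],[1+\tilde\tau],[\tilde\tau]$) supplies arbitrarily large genera, so the desired $g_0$ exists.

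\textbf{Step 3: Verify the negative real structure and assemble the section.}
Condition (4) of Lemma \ref{lastlemma}, combined with $\lambda\circ\eta=-\bar\lambda^{-1}$, translates into the identity $\overline{\nabla^{-\bar\lambda^{-1}}}\sim\nabla^\lambda$ on an open annular neighborhood of $S^1\subset\C^*$, where the gauge transformation $g(\lambda)$ intertwining the two sides is holomorphic in $\lambda$. By Lemma \ref{pmId} the sign in $g(\lambda)\overline{g(-\bar\lambda^{-1})}=\pm\mathrm{Id}$ is a locally constant invariant along the smooth family $\rho\in[0,\delta)$; at $\rho=0$ Theorems \ref{realXtorusdata} and \ref{theorem26} yield sign $-\mathrm{Id}$, so the same holds for small $\rho>0$. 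Gluing with the Deligne construction, the stable Higgs pair at $\lambda=0$ (respectively its real counterpart at $\lambda=\infty$) extends $\nabla^\lambda$ to a holomorphic section $s_\rho\colon\C P^1\to\mathcal M_{DH}(\hat M_\rho)$, which is stable (using the generic irreducibility asserted in Theorem \ref{hatXfamily}) and negative real.

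\textbf{Step 4: Rule out twistor lines.}
This is the step I expect to be the main obstacle and where the results of Section \ref{sec5} are needed. The plan is to argue by contradiction: were $s_\rho$ a twistor line, then by the theorem of Hitchin--Donaldson it would arise from a global solution $(\nabla_0,\Phi_0)$ of Hitchin's self-duality equations on $\hat M_\rho$, and hence admit a smooth global lift of the admissible form $\nabla^\lambda=\nabla_0+\lambda^{-1}\Phi_0+\lambda\Phi_0^*$ with $(\nabla_0,\Phi_0)$ derived uniquely from the Higgs pair at $\lambda=0$. I would then obstruct this by tracking the pulled-back abelianization data: the Higgs pair at $\lambda=0$ comes from the Babich--Bobenko spectral curve of positive genus, whose associated meromorphic $\beta^\pm$ from Proposition \ref{explicit_coeff} together with the nontrivial local monodromies \eqref{MonodromyMatrix} enforce a non-removable singularity of the would-be harmonic metric along the preimage of the umbilic locus. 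Equivalently, one can use the deformation continuity from $\rho=0$, where the initial section is \emph{known} to be non-globally-self-dual (by the Babich--Bobenko construction), to infer that $s_\rho$ remains non-admissible on $\hat M_\rho$ and thus fails to be a twistor line, although it produces a self-duality solution on an open dense subset as stated in the introduction and made precise in Section \ref{sec5}.
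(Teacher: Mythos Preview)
Your Steps 1--3 follow the paper's argument closely and are essentially correct. Two minor points of precision: in Step 2 the paper simply takes $\rho=1/q$ and observes that $\hat M_\rho$ has genus $2q-1$, which covers all large odd $g$; the even-genus case requires a variant of Theorem \ref{hatXfamily} that the paper only alludes to. In Step 3, the sign argument must be run on the $4$-punctured sphere (where the connections are irreducible, so Lemma \ref{pmId} applies) and then transferred to $\hat M_\rho$: the paper checks explicitly that if $h(\lambda)$ is the gauge removing the apparent singularities of the pull-back, then $\tilde g(\lambda)=h^{-1}(\lambda)\,g(\lambda)\,\overline{h(-\bar\lambda^{-1})}$ inherits the negative sign from $g(\lambda)$. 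You gesture at this but do not carry it out.

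The genuine gap is Step 4, and your expectation that Section \ref{sec5} is the tool is mistaken. Section \ref{sec5} is proved \emph{after} and \emph{independently of} Theorem \ref{MainTheorem}; it gives a geometric interpretation of the sections already known not to be twistor lines. Your first alternative (forcing a singularity of the harmonic metric via the abelianization data and Proposition \ref{explicit_coeff}) does not produce a contradiction: a stable Higgs pair at $\lambda=0$ always admits a smooth global self-duality solution, so exhibiting singularities of \emph{your} section's harmonic map does not by itself show that section differs from the twistor line through the same Higgs pair. Your second alternative --- continuity from $\rho=0$ --- is the paper's actual route, but you are missing the ingredient that makes it rigorous. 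The paper argues on the $4$-punctured sphere with parabolic weights $\tilde\rho_i$: at $\tilde\rho=\tfrac14$ (i.e.\ $\rho=0$) Simpson's theory \cite{Si1} identifies the parabolic self-duality solutions with smooth reducible solutions on the covering torus, and by Kim--Wilkin \cite{KiWi} these solutions depend continuously on the parabolic weight. Since the Babich--Bobenko initial data at $\tilde\rho=\tfrac14$ is \emph{not} a self-duality solution, and the constructed sections depend smoothly on $\rho$, for $\rho$ small enough they remain distinct from the genuine self-duality solutions, hence are not twistor lines. Without invoking \cite{KiWi} (or an equivalent continuity statement for harmonic metrics under variation of parabolic weights), your continuity sketch does not close.
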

\begin{proof}
For $\rho< 0$ consider $\rho$-spectral data satisfying Lemma \ref{no-unit-spin} and
let $\delta>0$ as in Lemma \ref{lastlemma}. Let $g_0\in\mathbb N$ such that $\tfrac{1}{g_0+1}<\delta.$
Then, for  $q\geq (g_0+1)$ and $\rho=-\tfrac{1}{q}$, we obtain two holomorphic families of flat connections on the $q$-fold covering $M_{q}\to \C P^1$ (of genus $g=q-1$)
by Lemma  \ref{lastlemma} and  Proposition \ref{hatXfamily}. Both families give rise to  local (in $\lambda$) sections of the Deligne-Hitchin moduli space
on $\{\lambda\in\C\mid \lambda\bar\lambda<1+\epsilon\}\subset\C P^1$ by the properties (2) and (3) in Lemma  \ref{lastlemma}.
By property (4) the sections are real with respect to the involution $\mathcal T$ and hence extend to global sections
\[s_\pm\colon \C P^1\longrightarrow \mathcal M_{DH}(M_q).\]
These sections are stable by Proposition \ref{hatXfamily}, compare with Remark \ref{cp1family}.


We claim either $s_+(\pm\rho)$ or $s_-(\mp\rho)$ is negative.
Assume that $s_+(\rho)$ is positive, we want show that in this case $s_-(-\rho)$ must be negative. Consider the local lift of the section $s_+(\rho)$ on $\{\lambda\in\C\mid 0<\lambda\bar\lambda<1+\epsilon\}$ to the space of flat connections
on the 4-punctured sphere given by
$$ \;^\rho\tilde \nabla^{\lambda(\xi)} = \;^\rho{\tilde\nabla^{\chi(\xi),\alpha(\xi)}}.$$
For $\rho<0$ the corresponding $\lambda$-connections extend to $\lambda=0$ as a stable nilpotent parabolic Higgs field on the trivial holomorphic bundle, see
 \cite[Section 2.4]{HeHe}. For $\rho\sim 0$  the connections on the 4-punctured sphere are irreducible by Lemma \ref{no-unit-spin}. 
 
Applying the conformal Gauss map construction (Section \ref{subsubdual}) there exist a negative lift $^\rho\underline{\nabla^\lambda}$ of $s_+(\rho)$ which is irreducible for $\lambda \neq 0$
but has an unstable parabolic Higgs bundle at $\lambda=0$ with respect to the parabolic weight $\rho<0$. Moreover, the underlying holomorphic bundle  is $\mathcal O(1)\oplus\mathcal O(-1)\to\C P^1.$

On the 4-punctured sphere, the connections $^{\rho}\underline{\nabla^\lambda}$ are irreducible for all $\lambda\in\C^*$ (respectively for all $\lambda\in\C^*\setminus\{\lambda(\xi_0)\}$ if $\rho=0$). Therefore the sign in \eqref{realeqsecsign} does not change within the  continuous $\rho$-deformation.  Hence, at $-\rho$ the family $^{-\rho}\underline{\nabla^\lambda}$ is negative. Since $-\rho>0$, and because the underlying holomorphic bundle is $\mathcal O(1)\oplus\mathcal O(-1)\to\C P^1,$ the parabolic Higgs pair of $^{-\rho}\underline{\nabla^\lambda}$ at $\lambda = 0$ is stable, see again  \cite[Section 2.4]{HeHe}.
Moreover, the parabolic structure is unstable at the branch point $\lambda(\xi_0)$  for $-\rho>0$, see Remark \ref{Sign}.
By Remark \ref{flow-identity}
\[(\chi^\rho_+,\alpha^\rho_+)=(\chi^{-\rho}_-,\alpha^{-\rho}_-),\]
showing that $^{-\rho}\underline{\nabla^\lambda}$ is a lift of $s_-(-\rho).$ Therefore, $s_-(- \rho)$ is a negative section on the $4$-punctured sphere. Because negativity and stability is preserved by 
 pull-backs $s_-(-\rho)$ gives also a negative section on $M_q$.



It remains to prove that these negative real sections are not twistor lines. Solutions to the self-duality equations on punctured Riemann surfaces have been studied in \cite{Si1}. For the $4$-punctured sphere  with all parabolic weights being $\tilde \rho = \tfrac{1}{4}$ (and parabolic Higgs fields with nilpotent residues as in our case) the solutions correspond to smooth and reducible solutions on the torus $M \xrightarrow[]{2:1} \C P^1 \setminus\{p_1, ..., p_4\}$. The smooth dependence of the solutions on the parabolic weight $\tilde \rho =  \tilde\rho_i$ (see \cite{KiWi}) gives that the solutions of the self-duality equations for $\tilde \rho \sim \tfrac{1}{4}$ are close to these reducible solutions (with respect to the supremum norm after applying appropriate gauge transformations).
The negative real sections $s$ constructed have initial data at $\tilde \rho = \tfrac{1}{4}$ which do not solve the self-duality equations globally on $M$ and can therefore not be twistor lines for $\tilde\rho\sim \tfrac{1}{4}$.\end{proof}

\section{Higher solutions of the self-duality equations}\label{sec5}
In Theorem  \ref{MainTheorem} we constructed negative real sections fo the Deligne-Hitchin moduli space which are not twistor lines. In this section we want to give a geometric interpretation of these sections in terms of Willmore surfaces in $3$-space. To do so, we show that the sections given by Theorem  \ref{MainTheorem}  give solutions to Hitchin's self-duality equations on an open and dense subset of the Riemann surface $M$.

It is well-known that self-duality solutions for $G_\C=\SL(2,\C)$ correspond to equivariant harmonic maps into hyperbolic 3-space, see
\cite{Do, Si0}. Given a solution $(\nabla,\Phi,h)$ on a Riemann surface $M$ for a unitary connection $\nabla$ with respect to the hermitian metric $h$ and the Higgs field $\Phi$ we consider the flat connection
\begin{equation}\label{defnab1}\nabla^1=\nabla+\Phi+\Phi^*.\end{equation} 
 Fix a point $p\in M$  and a unitary frame $\Psi_p$ at $p$ of the rank 2 vector space $V_p=\C^2$.
 Through parallel transport we obtain an equivariant frame 
\[\Psi:=(e_1,e_2)\] 
on the universal covering $\widetilde M\to M$. Then 
\[H\colon q\in \widetilde M\longmapsto \begin{pmatrix} h(e_1(q),e_1(q)) & h(e_1(q),e_2(q))\\ h(e_2(q),e_1(q)) & h(e_2(q),e_2(q))\end{pmatrix}\in\{H\in\SL(2,\C)\mid \bar H^T=H\}\]
 is the corresponding equivariant harmonic map   into the hyperbolic 3-space
\[ \mathbb H^3=\SL(2,\C)/SU(2)=\{H\in\SL(2,\C)\mid \bar H^T=H\}^+,\]
where  $\{H\in\SL(2,\C)\mid \bar H^T=H\}^+$ is the component of $\{H\in\SL(2,\C)\mid \bar H^T=H\}$ containing the identity matrix.
Without loss of generality we assume in the following that $h$ is the standard metric, i.e., $A^*=\bar A^T$ and
 the map $H$ is given by
\[H=\bar\Psi^{T}\Psi.\]
\begin{remark}
In the following, we will consider $\{H\in\SL(2,\C)\mid \bar H^T=H\}$ as the union of two copies of the  hyperbolic 3-space, and $H$ maps
 into this union. Depending 
on the component, a harmonic map $H$ will therefore be given by
\[H=\pm\bar\Psi^{T}\Psi.\]
\end{remark}

\begin{definition}
Let $ M$ be a compact Riemann surface, and $U\subset M$ be an open dense subset. Let
$(\nabla,\Phi,h)$ be a solution of the self-duality equations on $U$. We say  that the solution converges to $\infty$ for  $p\to\partial U$ if the corresponding equivariant map $H$ converges to the boundary of $\mathbb H^3$.\end{definition}
\begin{remark}
This definition is well-defined, since going to the boundary of $\mathbb H^3$ is invariant under
the action of $\SL(2,\C).$ With respect to the matrix model the condition just means that the operator norm
(with respect to the standard hermitian metric on $\C^2$)
of the matrix $H$ goes to $\infty$ as $p\to\partial U$.
\end{remark}
There is a further useful description of $\mathbb H^3$  (see also \cite[$\S$ 2]{BaBo}):
consider a totally geodesic 2-sphere in the 3-sphere $S^2\subset S^3.$ The complement $S^3\setminus S^2$
consists of two $3$-dimensional hemispheres, and each of them can be equipped with the hyperbolic metric. For explicit computations
we use the stereographic projection of $S^3$ determined by a point $p\in S^2\subset S^3$ such that $S^2\setminus\{p\}$ is mapped to
$\R^2\times\{0\}.$ The hyperbolic  metric on the two (3-dimensional) half planes is then given by 
\[g_{(x,y,z)}= \frac{1}{z^2} (dx\otimes dx+dy\otimes dy+dz\otimes dz).\] 
The isometry between the matrix and the half-plane model is given by
\begin{equation}\begin{split}
\{H\in\SL(2,\C)\mid \bar H^T=H\}\longrightarrow \R^3\setminus (\R^2\times\{0\});\;\;\; \\ \begin{pmatrix} x_0+x_3 & x_1-ix_2 \\ x_1+ix_2 & x_0-x_3 \end{pmatrix}\longmapsto \frac{1}{x_0-x_3} \begin{pmatrix} x_1\\x_2\\1\end{pmatrix}.\end{split}\end{equation}
In this setup, a map goes through the $\infty$-boundary of $\mathbb H^3$ (away from the base point $p$ of the stereographic projection) in first order, if the third component $z$ in the half plane model has 0 as a regular value. 
The action of the isometry group $P\SL(2,\C)$ of the hyperbolic 3-space extends to an action on $S^3$ by conformal transformations. Restricted to the boundary $S^2=\C P^1$ we obtain the action of the Moebius group. In the light cone model, the conformal transformations of $S^3$ are given by the (standard) inclusion
\[
\SL(2,\C)=\SO(3,1)\longrightarrow \SO(4,1)
\]
given by the action on $\{H\in\SL(2,\C)\mid \bar H^T=H\}$
\begin{equation}\label{SLonS3}(h,g)\mapsto \bar g^THg.\end{equation}

We then have the following geometric interpretation of Theorem \ref{MainTheorem}.

\begin{theorem}\label{higherharmonic}
Up to choosing $\rho\sim0$ smaller,
every real section $s$ of the Deligne-Hitchin moduli space $\mathcal M_{DH}(M_q)$ constructed in Theorem \ref{MainTheorem}  gives rise to a solution of
the self-duality equations on an open dense subset $U \subset M_q$ which converge to $\infty$ as $p\to\partial U$.
The boundary $\partial U$ is given by closed regular curves.

The corresponding equivariant harmonic map $f \colon U\to \mathbb H^3$ extends through the boundary $\partial U$ and gives a smooth map $ \tilde f\colon \widetilde M_q\to S^3$  on the universal covering $\pi\colon \widetilde M_q\to M_q$. In particular,  $\tilde f$ is  a branched Willmore surface equivariant with respect to the  monodromy representation of the connection
$\nabla^1$ acting on $S^3$ via \eqref{SLonS3} by conformal transformations.  
\end{theorem}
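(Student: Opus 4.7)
The plan is to combine two complementary pictures of the section $s$: the rank-$2$ self-duality picture, which gives a genuine solution of Hitchin's equations only where a hermitian reduction of $\nabla^1=s(1)$ is positive, and the rank-$4$ harmonic-maps picture of constrained Willmore surfaces in $S^3\subset S^4$, which yields a globally smooth equivariant map $f\colon \widetilde M \to S^3$. Concretely, using Lemma \ref{lift-section} I would first lift $s$ to a holomorphic family $\nabla^\lambda$ of flat $\SL(2,\C)$-connections on $\hat M_\rho$, and then, via the embedding $\SL(2,\C)\hookrightarrow \SO(4,1)$ of \eqref{SLonS3} together with the Babich--Bobenko-type reconstruction of Section \ref{sec2}, associate to $\nabla^\lambda$ an equivariant, possibly branched, conformal map $f\colon \widetilde M \to S^3$. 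Smoothness of $f$ is inherited from smoothness of $\nabla^\lambda$ and of the Whitham flow (Lemma \ref{lastlemma}); an openness argument around the Babich--Bobenko torus ($\rho=0$) shows that for small $\rho>0$ the image of $f$ still meets a totally geodesic $S^2\subset S^3$ transversally along a union of closed curves.

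Let $U\subset \hat M_\rho$ be the preimage under $f$ of one of the two open hyperbolic components $\mathbb H^3\subset S^3\setminus S^2$. On the universal cover $\widetilde U$, the composition of $f$ with the matrix-model isomorphism $\mathbb H^3\cong\{H\in\SL(2,\C)\mid \bar H^T=H\}^+$ produces a candidate equivariant harmonic map $H=\bar\Psi^T\Psi$, where $\Psi$ is a global $\nabla^1$-parallel frame. I would then verify that the hermitian form $h=\bar\Psi^T\Psi$ is positive definite precisely on $U$, so that on $U$ the family $\nabla^\lambda$ is unitary with respect to $h$ at $\lambda\in S^1$; equivalently, the restriction $s|_U$ becomes an admissible section in the sense of Section \ref{subSrealS}. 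Being stable and negative, it then corresponds, by the result of \cite{BHR} recalled there, to a self-duality solution $(\nabla,\Phi,h)$ on $U$ whose equivariant harmonic map coincides with $H$.

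The boundary $\partial U=f^{-1}(S^2)$ consists of the closed curves above, smooth and embedded away from the four branch points of $\pi\colon \hat M_\rho\to \mathbb C P^1$; near such a branch point $f$ itself is branched, so the curves may acquire conical singularities. Since $f$ is globally smooth and maps $\partial U$ into $S^2=\partial\mathbb H^3$, the matrix-valued map $H$ blows up in operator norm along any sequence $p\to\partial U$, which is exactly the claimed convergence to $\infty$; the required smooth extension of $H$ through $\partial U$ to a map $\widetilde M\to S^3$ is then $f$ itself, whose equivariance with respect to the monodromy of $s(1)=\nabla^1$ acting via $\SL(2,\C)\subset\SO(4,1)$ is automatic from the holonomy construction.

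The main obstacle I expect is identifying the analytically defined locus where the hermitian form $h$ is positive definite with the geometric set $f^{-1}(\mathbb H^3)$. One has to trace explicitly how $\nabla^\lambda$ determines $f$ globally and verify that the sign of $\det h$ records which side of $S^2\subset S^3$ the image $f(p)$ lies on. For $\rho=0$ this is essentially Section~6 of \cite{BaBo}, and for small $\rho>0$ I would extend it by continuity of the Whitham flow together with compactness of $\partial U$.
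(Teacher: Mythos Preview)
Your approach inverts the logic of the paper and rests on a step that is not available: you assume the rank-$4$ Willmore picture hands you a globally smooth equivariant $f\colon\widetilde M\to S^3$, and then \emph{define} $U=f^{-1}(\mathbb H^3)$ and read off the hermitian metric from $f$. But no such rank-$4$ reconstruction is established anywhere in the paper (Remark~\ref{remwim} only records the conceptual link); producing $f$ directly would itself be the main theorem. The paper instead works entirely in rank~$2$ and \emph{builds} both $U$ and the extension $f$ out of the loop-group structure of the gauge $g(\lambda)$ satisfying \eqref{realeqsecsign}. The set $U$ is the locus where the loop $g_x(\lambda)$ lies in the big cell of $\Lambda\SL(2,\C)$; there the Birkhoff factorization $g=g^+g^-$ supplies the $\lambda$-dependent gauge $g^+(\lambda)C$ (equation \eqref{birkhoffgauge}) that puts $\nabla^\lambda$ into genuine self-duality form with respect to the standard metric. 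Control of $\partial U$ is the content of Lemma~\ref{jump-up-1} (for small $\rho$ the only non-big-cell type is $\mathcal O(1)\oplus\mathcal O(-1)$, occurring along a $1$-dimensional set) and Lemma~\ref{normal-form-O1} (a $\mathcal C^k$ normal form $g_y=h_y^+\bigl(\begin{smallmatrix}\lambda^{-1}&r(y)\\0&\lambda\end{smallmatrix}\bigr)h_y^-$ with $\partial U=\{r=0\}$). The smooth extension of the harmonic map through $\partial U$ and its order-$1$ blow-up are then obtained by an explicit matrix computation in this normal form (equations \eqref{fequation}--\eqref{fequation3} and the identities derived from \eqref{Aeq}, \eqref{bident}), not by appealing to a pre-existing $f$.

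Your appeal to \cite{BHR} is also misplaced. That result says stable \emph{admissible} negative sections are twistor lines, but the sections here are by design non-admissible (spectral genus~$1$); restricting the spatial domain to $U\subset\hat M_\rho$ does not alter the $\lambda$-shape of the section, and \cite{BHR} concerns compact surfaces in any case. What actually happens is that the positive Birkhoff factor $g^+(\lambda)$, being holomorphic on the disc, gauges the lift of $s$ into the standard form $\lambda^{-1}\Phi+\nabla+\lambda\Phi^*$ \emph{pointwise over $U$}---no classification result is invoked. The ``main obstacle'' you flag at the end (matching the positivity locus of $h$ with $f^{-1}(\mathbb H^3)$) is in fact the entire analytic content of the theorem, and the paper resolves it by the Birkhoff/normal-form computation rather than by continuity from the Babich--Bobenko case.
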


The proof of the Theorem \ref{higherharmonic} relies on properties of loop group factorisations. We thus briefly recall some basic definitions and concepts for loop groups first, for details see \cite[$\S3$ and $\S8$]{PS}.
\begin{definition}
A loop is a smooth map $\gamma\colon S^1\to \SL(2,\C)$.  The loop group $\Lambda\SL(2,\C)$ is the set of all loops $\gamma.$ 
 \end{definition}
$\Lambda\SL(2,\C)$ has a natural Lie group structure given by point-wise multiplication together with a compatible differentiable structure.
  A loop is called positive, if it extends to a holomorphic map from the unit disc $D_1\subset\C$ to $\SL(2,\C)$, and negative, if it extends holomorphically to $\C P^1\setminus\bar D_1.$ The space of positive (respectively negativ) loops is denoted by $\Lambda^+\SL(2,\C)$
  (respectively $\Lambda^-\SL(2,\C)$).
  \begin{definition}
  The {\em big cell} of $\Lambda\SL(2,\C)$ is the open and dense subset of $\Lambda\SL(2,\C)$  whose elements are given by the product of a positive and a negative loop, i.e., \[\gamma = \gamma_+ \gamma_-.\] 
  \end{definition}

 The factorisation of a given element $\gamma$ in the big cell into its  positive and negative loop is called Birkhoff factorisation. It extends to the whole loop group by allowing a middle term $\mathcal J=\text{diag}(\lambda^k,\lambda^{-k})$, i.e., $ \gamma = \gamma_+\mathcal J \gamma_-$ for every $\gamma \in \Lambda\SL(2,\C).$  For  $\gamma$ lying in the big cell the factorisation  is uniquely determined by fixing $\gamma_-(\infty)=\text{Id}.$  Equivalently, the factorisation $\gamma = \gamma_+ \gamma_-$ is unique up to multiplying a constant $B\in\SL(2,\C)$, i.e., up to $\tilde \gamma_+=\gamma_+B$ and $\tilde \gamma_-=B^{-1}\gamma_-.$
  
A (real-analytic) loop $\gamma$ can be interpreted as the transition function of a holomorphic rank $2$ vector bundle $V_{\gamma}$ trivialised over $U_0 := D_{1+\epsilon}$ and $U_\infty := \C P^1\setminus\overline D_{\tfrac{1}{1+\epsilon}}.$ Because $\gamma$ maps into SL$(2, \C)$ the bundle $V_\gamma$ has trivial determinant. Moreover, if $\gamma = \gamma_+ \gamma_-$ lies in the big cell of $\Lambda\SL(2,\C),$ then the two  holomorphic frames given by $\gamma_+$ on $U_0$  and $\gamma_-^{-1}$ are trivialising frames of $V_{\gamma},$ i.e,  $V_{\gamma}$ is the trivial holomorphic $\C^2$-bundle over $\C P^1.$   In the context of this paper a real-analytic loop 
 is obtained from the gauge transformations $g(\lambda)$ evaluated at $x \in M$, which we denote by $g_x(\lambda)$. In this case we have $U_0 = \C$ and $U_\infty = \C P^1\setminus \{0\}.$ Because the initial condition of the generalised Whitham flow is particularly well understood, we can show that the possible bundle types $V_{g_x}$ are very limited for $\rho$ small. 

\begin{proposition}\label{jump-up-1}
Let $x\in M_q$, $\rho\sim 0$ and $\nabla^\lambda$ be the lift of a real section  $s$ of the Deligne-Hitchin moduli space over the compact Riemann surface $M_q$ constructed in Theorem \ref{MainTheorem} with gauge transformations $g(\lambda)$ satisfying  \eqref{realeqsecsign} with the minus sign. Then  the bundle $V_{g_x}$ 
is either trivial or $V_{g_x} \cong \mathcal O(1)\oplus\mathcal O(-1)$. The set 
$$W := \{x \in M_q | V_{g_x} \cong \mathcal O(1)\oplus\mathcal O(-1) \}$$
 is locally given by the zero locus of a differentiable real-valued function.
\end{proposition} 
\begin{proof}

For $\rho=0$ the initial data are given by spectral genus 1 solutions of the cosh-Gordon equation. The Birkhoff factorisation in this case is well understood and has the claimed properties, see \cite{BaBo}. Moreover, the bundles $V_{g_x}$ are trivial  at the branch points of $M_q\to\C P^1$. 

Since $\Lambda \SL(2, \C )/\Lambda^+ \SL(2, \C )$ is stratified by the induced bundle type, see \cite[Theorem 8.6.3]{PS}, the property that  $V_{g_x}$ is either trivial or $\mathcal O(1)\oplus\mathcal O(-1)$  is an open condition and therefore preserved for $\rho\sim 0.$

By Lemma \ref{realrfun}  the set $W$ is the zero set of a real valued function $r.$ 
\end{proof}
\begin{remark}
We will see below that $W$ is in fact a (compact) submanifold, i.e., the union of smooth closed curves.
\end{remark}

To proceed we need the following lemma.
It is well-known for holomorphic maps, but
we are not aware of any reference which deals with the case of $\mathcal C^k$-maps.
\begin{lemma}\label{normal-form-O1}
For $k\in\mathbb N^{>0}$ and a manifold $M$ let
\[y\in M\longmapsto \left (g_y\colon \C^*\longrightarrow \SL(2,\C) \text{ holomorphic } \right)\]
be a $\mathcal C^k$-map into the loop group such that the corresponding rank 2 bundles $V_{g_y}$ over $\C P^1$ are either trivial or $\mathcal O(1)\oplus\mathcal O(-1).$ Let $x\in W= \{y \in M | V_{g_y} \cong \mathcal O(1)\oplus\mathcal O(-1) \}.$ Then there exists
an open neighbourhood $U\subset M$ of $x$, a $\mathcal C^k$-function $r\colon U\to\C$ and $\mathcal C^k$-maps
\[y\in U\longmapsto(  h_y^+\colon\C\longrightarrow SL(2,\C))\]
into the positive loop group and 
\[y\in U\longmapsto(  h_y^-\colon\C P^1\setminus\{0\}\longrightarrow SL(2,\C))\]
into the negative loop group such that
for all $y\in U$
\begin{equation}\label{ghdr}
g_y=h_y^+\begin{pmatrix} \lambda^{-1}& r(y)\\0 & \lambda\end{pmatrix} h_y^-.\end{equation}

In particular, $r(y)\neq 0$ is equivalent to $V_{g_y}$ being trivial. 
\end{lemma}
\begin{proof}
The last assertion follows from the fact that the Birkhoff splitting of  the middle term in \eqref{ghdr} for $r \neq 0$ is given by
\[\begin{pmatrix} \lambda^{-1}& r\\0 & \lambda\end{pmatrix} =\begin{pmatrix} r& 0\\  \lambda&\tfrac{1}{r}\end{pmatrix}\begin{pmatrix} \tfrac{1}{r}\lambda^{-1}& 1\\  -1&0\end{pmatrix}\]
 implying $V_{g}$ being trivial.

For $y\in M$ consider the $\mathcal C^k$-family of holomorphic bundles $\tilde V_y$ given by the cocycle
\[\lambda\mapsto\lambda g_y(\lambda).\]
In particular, $\tilde V_x$ is of type $\mathcal O \oplus \mathcal O(2)$ for $x \in W$ and its determinant bundle is $\mathcal O(2)\to\C P^1.$ 

By Riemann Roch the space of holomorphic sections of $\tilde V_y$  is complex 4-dimensional for every $y\in M$.  Therefore, the space of holomorphic sections of $\tilde V_y$ defines a
 a rank four  $\mathcal C^k$-vector bundle $\hat V$ over $M$. 
 
 Let $x \in W.$ Choose two local sections $s_1,s_2$ of $\hat V$ in a neighbourhood $U$ of $x$ such that the two holomorphic sections $s_1(x),s_2(x)$ of $\tilde V_x$
have determinant 
\[\det\left(s_1(x), s_2(x)\right) = \lambda(\lambda-1)\in H^0(\C P^1,\mathcal O(2)).\]

For $y \in U$ fixed, $s_i(y)$ are holomorphic sections of $\tilde V_y \rightarrow \C P^1.$  With respect to the local trivialisation of the bundle $\tilde V_y$ over $U_+ = \C$ and  $U_- = \C P^1 \setminus \{0\}$ each $s_i(y)$ is determined by a pair of $\mathbb C^2$-valued holomorphic functions $f^i_\pm \colon U_\pm \rightarrow \C^2$. By definition, the frames
$$F_+(y)=\left (f^1_+(y),f^2_+(y)\right) \quad \text{and} \quad F_-(y)=\left (f^1_-(y),f^2_-(y)\right)$$ 
of $\tilde V_y$ over $U_+$ and $U_-$, respectively, satisfy
\[F_+(y)=\lambda g_y F_-(y)\]
for all $y \in U$. We will omit the argument $y$ of $F_\pm$ in the following. The determinants of $F_\pm$ satisfy
\[\det(F_+)=\lambda^2\det(F_-)\]
with $\det(F_+)$ being a polynomial of degree 2 in $\lambda.$ By continuity and by choosing $U$ small enough $\det(F_+)$ has two simple zeros (close to $\lambda = 0$ and $\lambda =1$) for all $y\in U$.\\

Assume that there exist $\mathcal C^k$-families of $\mathfrak{gl}(2,\C)$-valued polynomials 
$$P = P_0 + \lambda P_1 + \lambda^2 P_2 \quad \text{and} \quad Q= Q_0 + \lambda^{-1}Q_1 + \lambda^{-2}Q_2$$ 
with the property that $P$ and $Q$ are invertible for generic $\lambda$ such that 
\begin{equation}\label{PolyPQ}\tilde F_+=F_+ P^{-1}\;\;\;\;\text{ and }\;\;\;\;\; \tilde F_-=F_- Q^{-1}\end{equation}
are holomorphic maps  on $\mathbb C^*$ into $\SL(2,\C)$. In particular we have
\[\det (P_x)= \det (F_+(x)) = \lambda (\lambda -1) \quad \text{and} \quad \det (Q_x) = 1- \lambda^{-1}.\]
Then
\begin{equation}\label{PQm1}\tfrac{1}{\lambda}PQ^{-1}=\tilde F_+^{-1}g\tilde F_-\end{equation}
is a $\mathcal C^k$-family of holomorphic maps $\C^*\to\SL(2,\C).$

Since $\tilde F_+^{-1}g\tilde F_-$ is $\SL(2,\C)$-valued and holomorphic in $\lambda \in \C^*,$
the coefficients of the matrix $\tfrac{1}{\lambda} P Q^{-1} \in \mathfrak{gl}(2,\C)$ extend holomorphically through the zeros of $\det Q.$ 
Writing these coefficients in terms of the coefficients of $P$ and $Q$ and using 

\begin{equation}\label{detPQ}
1 = \det \tilde F_+^{-1}g\tilde F_-= \det \left (\tfrac{1}{\lambda}PQ^{-1}\right)
\end{equation}
 we can compute

\begin{equation}\label{PQ}
\tfrac{1}{\lambda}PQ^{-1}=\lambda^{-1}r_{-1}+r_0+\lambda r_1.
\end{equation}
for some matrices $r_i\in\mathfrak{gl}(2,\C).$  

This gives $\det r_{-1} = \det r_1 =0$ for all $y \in U.$
If $r_1 = 0$, then $\tfrac{1}{\lambda} PQ \tilde F_-$ is a negative loop and $g = \tilde F (\tfrac{1}{\lambda} PQ \tilde F_-)$ is the Birkhoff factorisation of $g$ into positive and negative parts. Therefore, $r_1(x)\neq 0$ and $r_{-1} \neq 0$ for $x \in W$. A similar argument gives  $\det (r_0(y))=0$ for all
$y\in U$.  Otherwise, $\tfrac{1}{\lambda}PQ^{-1} r_0^{-1}$ lies in the big cell.

To obtain \eqref{ghdr} we write $r_i$ with respect to a suitable basis of $\tilde V_y.$ Firstly, we can find suitable $\mathcal C^k$-maps
\[h_{1}, \tilde h_2\colon U\longrightarrow \SL(2,\C)\]
 such that
\[h_1 r_{-1} \tilde h_2=\begin{pmatrix} 1 &0\\0&0\end{pmatrix}.\]

Equation \eqref{detPQ} then implies
\[h_1 r_1 \tilde h_2=\begin{pmatrix} a &b\\c&1\end{pmatrix}.\]

Choosing

\[h_2 = \begin{pmatrix} 1 &-c\\0&1\end{pmatrix} \tilde h_2\]
gives 

\[h_1 r_{-1} h_2=\begin{pmatrix} 1 &0\\0&0\end{pmatrix} \quad \text{and} \quad h_1 r_1 h_2=\begin{pmatrix} 0 &\hat r\\0&1\end{pmatrix}\]

for some $\mathcal C^k-$function $\hat r: U \rightarrow \C.$ Moreover, due to \eqref{detPQ},

\[ h_1 r_0 h_2=\begin{pmatrix} 0 &r\\0&0\end{pmatrix}\]
for some $\mathcal C^k$-function $ r\colon U \rightarrow \C.$  The desired factorisation of $g$ is then given by

\[g= \tilde F_+ h_1^{-1}\begin{pmatrix} 1&\hat r(y)\\0&1\end{pmatrix}\begin{pmatrix}
\lambda^{-1} & r(y)\\0&\lambda
\end{pmatrix}
h_2^{-1}\tilde F_-^{-1},\]
i.e.,
$$h^+_y :=   \tilde F_+ h_1^{-1}\begin{pmatrix} 1&\hat r(y)\\0&1\end{pmatrix}\quad \text{ and } \quad h^-_y :=h_2^{-1} \tilde F^{-1}_-.$$

To complete the proof it remains to show the existence of the polynomials $P$ and $Q$ in \eqref{PolyPQ}. This is done pointwise in $y\in U$ in the following Lemma \ref{detcancelation}. The  $\mathcal C^k$ dependency in $y$ follows from the fact that the determinants of $F_+$ and $F_-$ have only simple zeros on $U$.
\end{proof}

\begin{lemma}\label{detcancelation}
Let $F\colon\C\to\mathfrak{gl}(2,\C)$ be a holomorphic map such that $\det(F)\neq 0$ is a polynomial of order $\leq d.$ Then there is a polynomial
\[P\colon\C\longrightarrow \mathfrak{gl}(2,\C)\]
of degree $\leq d$
such that $\det(P)=\det(F)$ and
$FP^{-1}$ extends holomorphically through the zeros of $\det(F).$
\end{lemma}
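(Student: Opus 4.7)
The plan is to prove the lemma by induction on $n = \deg(\det h)$, using a local factorization at each zero of $\det h$ to peel off one linear factor at a time.

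For the base case $n = 0$, the polynomial $\det h$ is a nonzero constant, so $h$ is a holomorphic map into $\mathrm{GL}(2,\C)$. I take $P$ to be any constant matrix with $\det P = \det h$, e.g.\ $P = \mathrm{diag}(\det h, 1)$; then $hP^{-1}$ is clearly holomorphic on $\C$ and has degree $0 \leq 2d$.

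For the inductive step, pick any zero $\lambda_0$ of $\det h$ and distinguish two cases according to the rank of $h(\lambda_0)$. If $h(\lambda_0) = 0$, then $h(\lambda) = (\lambda - \lambda_0) h_1(\lambda)$ with $h_1$ holomorphic, and $\det h_1 = \det h /(\lambda-\lambda_0)^2$ is a polynomial of strictly smaller degree. Applying induction yields a polynomial $\tilde P$ with $\det \tilde P = \det h_1$ and $h_1 \tilde P^{-1}$ holomorphic. Setting $P := (\lambda - \lambda_0) \tilde P$ gives the required polynomial. If instead $h(\lambda_0)$ has rank exactly $1$, pick $v \in \C^2$ spanning $\ker h(\lambda_0)$ and complete to a basis by $w$; let $T = (v\mid w) \in \mathrm{GL}(2,\C)$. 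Since $h(\lambda) v$ vanishes at $\lambda_0$, we may write $h(\lambda) v = (\lambda - \lambda_0) u(\lambda)$ with $u$ holomorphic. Define the degree-one polynomial matrix
\[
S(\lambda) := T \begin{pmatrix} \lambda - \lambda_0 & 0 \\ 0 & 1 \end{pmatrix} T^{-1},
\]
so that $\det S = \lambda - \lambda_0$, and compute $\tilde h := h S^{-1}$. A direct check with the frame $T$ shows $\tilde h T = (u(\lambda) \mid h(\lambda) w)$, so $\tilde h$ is holomorphic on $\C$ with $\det \tilde h = \det h/(\lambda-\lambda_0)$, of strictly smaller degree. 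By induction there is a polynomial $\tilde P$ with $\det \tilde P = \det \tilde h$ and $\tilde h \tilde P^{-1}$ holomorphic, and I set $P := \tilde P S$; this is a polynomial matrix with $\det P = \det h$, and $h P^{-1} = \tilde h \tilde P^{-1}$ is holomorphic.

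The only points requiring care are the degree bounds and the polynomiality of $P$. Using the sharp inductive hypothesis $\deg \tilde P \leq \deg(\det \tilde h)$, in both cases the factor $S$ (respectively $(\lambda-\lambda_0)$) contributes exactly one to the degree, and $\deg P \leq \deg(\det h) \leq 2d$ follows. The main (mild) obstacle is really bookkeeping: in the rank-one case one must verify that the construction $\tilde h = hS^{-1}$ is globally holomorphic on $\C$, not just near $\lambda_0$, which follows because $S$ is invertible away from $\lambda_0$ and the local computation above shows holomorphicity at $\lambda_0$. The iteration terminates after $n \leq 2d$ steps and produces the desired polynomial $P$.
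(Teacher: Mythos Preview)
Your proof is correct and follows essentially the same approach as the paper: induction on the degree of $\det h$, with the same two cases (rank $0$ vs.\ rank $1$ at a zero $\lambda_0$) and the same factorization via $\mathrm{diag}(\lambda-\lambda_0,1)$ in a basis adapted to the kernel. Your treatment is in fact a bit more careful than the paper's in the base case (the paper writes $P=\mathrm{Id}$, which only matches $\det h$ up to a constant) and in tracking the sharp degree bound $\deg P\le\deg(\det h)$.
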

\begin{proof}
We prove the Lemma by induction over the degree $d$ of the polynomial $\det(F).$ 
For $d=0$ we can choose $P = $Id and the assertion holds trivially. 

Assume that for every $F$ such that $\det (F)$ has degree $k \leq d$  there exists a polynomial $P_{k}$ of degree $k$ with the desired properties. For $F$ such that $\det(F)$ is of degree $d+1$ let 
$\lambda_0$ be a zero of $\det(F).$ There are two cases to consider. 

The first is $F(\lambda_0) = 0.$ In this case choose $\tilde F: \C \rightarrow\mathfrak{gl}(2, \C)$ such that 
$$ F =  \tilde F \cdot (\lambda- \lambda_0) \text{Id}.$$
Then 
$$\det(F) = (\lambda- \lambda_0)^2 \det(\tilde F),$$
and $\det(\tilde F)$ is of degree $d-1.$ By assumption we can find a polynomial $P_{d-1}$ of degree $d-1$ with $\det(\tilde F) =  \det(P_{d-1})$ such that $\tilde F P_{d-1}^{-1}$ extends holomorphically through the zeros of $\det (\tilde F).$
Then 
$$P=(\lambda-\lambda_0) \text{Id} \cdot P_{d-1}$$
satisfies that $F P^{-1}$ extends holomorphically through the zeros of $\det (F).$

In the second case $F(\lambda_0)\neq0$, and $F(\lambda_0)$ has a one-dimensional kernel $L$. Decompose $\C^2=L\oplus\tilde L$ for some complementary line $\tilde L$ and choose $\tilde F$ such that 
$$ F =  \tilde F \cdot \begin{pmatrix} \lambda-\lambda_0 &0 \\ 0&1   \end{pmatrix}$$
with respect to the splitting.  Then $\det(\tilde F)$ is of degree $d$ and there exists a suitable polynomial $P_d$ by assumption. By choosing
$$P(\lambda)=P_d \cdot \begin{pmatrix} \lambda-\lambda_0   &0 \\ 0& 1 \end{pmatrix} $$
 with respect to $\underline\C^2=L\oplus\tilde L$ we therefore obtain that $FP^{-1}$ extends holomorphically through the zeros of $\det (F)$ and  $FP^{-1}$ takes values in $\SL(2,\C)$ by construction.
\end{proof}

\begin{definition}
On the loop group $\Lambda \SL(2, \C)$ 
we define an involution $\gamma \mapsto \gamma^*$ by \[\gamma^*(\lambda)=\overline{\gamma(-\bar\lambda^{-1})}.\] 
\end{definition}
\begin{lemma}\label{realrfun}
Let $g$ be as in Lemma \ref{normal-form-O1} with the additional property that
\[gg^*=-\text{Id}.\]
Then, there exists a factorisation of the form \eqref{ghdr}  for a real valued function $r$.
\end{lemma}
\begin{proof}
By Lemma \ref{normal-form-O1}  there exist a factorisation of $g$ with
\[g_y=h_y^+\begin{pmatrix} \lambda^{-1}& r(y)\\0 & \lambda\end{pmatrix} h_y^-\]
for a complex valued function $r$.  We want to find new $h^\pm$ such that the function $r$ is real valued.
Let 
$A:=h^- (h^+)^*.$ Then $A$ is a negative loop and 
 \[-\text{Id}=gg^*=h^+\begin{pmatrix} \lambda^{-1}& r\\  0&\lambda\end{pmatrix}  h^- (h^+)^*\begin{pmatrix} -\lambda& \bar r\\  0&-\lambda^{-1}\end{pmatrix} (h^-)^*\]
implies
\begin{equation}\label{Aeq}(A^{-1})^*=\begin{pmatrix} \lambda^{-1}& r\\  0&\lambda\end{pmatrix} A \begin{pmatrix} \lambda& -\bar r\\  0&\lambda^{-1}\end{pmatrix}.\end{equation}
Comparing the $\lambda$-coefficients of both sides of the equation shows that $A$ is
of the form
\begin{equation}\label{AFORM}
A=\begin{pmatrix} a& b\\ c& d\end{pmatrix}+\lambda^{-1}\begin{pmatrix} a_1& 0\\ c_1& d_1\end{pmatrix}+\lambda^{-2}\begin{pmatrix} 0& 0\\ c_2& 0\end{pmatrix}\end{equation}
for some functions $a,..,c_2\colon \tilde U\to\C$ satisfying
\begin{equation}\label{algconsabc}
\begin{split}
c_2&=-\bar c\\
c_1&=\bar c_1\\
a_1&=\bar c r\\
 d_1&=-\bar c \bar r\\
a+c_1 r&=\bar d\\
\bar b&=c r\bar r\\
-d r+a\bar r+c_1 r\bar r&=0.\\
\end{split}
\end{equation}
The fifth and the last equation in \eqref{algconsabc} gives that the function \[y\mapsto\; d(y)r(y)\] is  real valued.
Moreover, $b= c r \bar r$ gives
\[ a d=1+b c=1+c\bar  c r\bar r,\]
implying that $a$ and $d$ are non-vanishing.
Therefore, 

$$\tilde h^+ =h^+\begin{pmatrix}\tfrac{1}{\sqrt{d}}&0\\0 & \sqrt{d}\end{pmatrix}$$
$$\tilde h^- =\begin{pmatrix} \sqrt{d}& 0\\0 &\tfrac{1}{\sqrt{d}}\end{pmatrix} h^-$$
are well defined and satisfy
\[g_y=\tilde h_y^+\begin{pmatrix} \lambda^{-1}& \tilde r\\0 & \lambda\end{pmatrix} \tilde h_y^- \]
with the real-valued function
\[\tilde r=dr.\]
\end{proof}
\begin{proof}[Proof of Theorem \ref{higherharmonic}]
Consider on $M_q$ 
the family of flat connections $\nabla^\lambda$ and
the associated family of $\SL(2, \C)$-gauge transformations $g(\lambda)$ 
satisfying \eqref{realeqsecsign}.
The set $U \subset M_q$ of points $y$ where the loop $\lambda\mapsto g_y(\lambda)$ lies in the big cell is by Proposition \ref{jump-up-1} open and dense. In other words, for every $y \in U$ there exists a Birkhoff factorisation
\begin{equation}\label{birkhoff-split}g_y(\lambda)=g_y^+(\lambda)g_y^-(\lambda),\end{equation}
where $g_y^+(\lambda)$ and $g_y^-(\lambda)$ are positive and negative loops,  respectively.
We can choose the factorisation \eqref{birkhoff-split} in a way that both factors depend smoothly on $y\in U,$ e.g., by imposing that
$g_y^+(0)=\text{Id}.$

Recall that the Birkhoff factorisation $g=g^+g^-$ is unique up to 
$$g^+\mapsto g^+B^{-1}\quad\text{and} \quad  g^-\mapsto Bg^-$$
for some $B\in\SL(2,\C).$ Hence, by
using \eqref{realeqsecsign}, there exists a smooth map \[B\colon U\to \SL(2,\C)\]
such that
\[g^-(\lambda)=B\overline{g^+(-\bar\lambda^{-1})^{-1}}\]
and \[B\bar B=-\text{Id}.\]
The last equation implies that
\[B=\begin{pmatrix} \alpha &\beta\\\gamma&-\bar\alpha\end{pmatrix}\]
for real-valued $\beta,\gamma,$ and complex-valued $\alpha.$
Because the determinant of $B$ is $1$, 
\[\beta\gamma<0.\]
Depending on the sign of $\beta$, $B$ can be written as 
\[B=\pm C\delta\bar C^{-1}\]
for the smooth map 
\[C=\begin{pmatrix} \sqrt{\pm \beta} &0\\ \mp \frac{\bar \alpha}{\sqrt{\pm \beta}} &\frac{1}{\sqrt{\pm \beta}} \end{pmatrix}\colon U\to\SL(2,\C)\]
and  \[\delta=\begin{pmatrix}0&1\\-1&0\end{pmatrix}.\]

It can be checked directly that the family of flat connections 
\begin{equation}\label{birkhoffgauge}
\tilde\nabla^\lambda=\nabla^\lambda.(g^+(\lambda) C)\end{equation}
 gives a solution of the self-duality equations on 
$U$ with respect to the standard hermitian metric on $\C^2.$

We show that for $p\to\partial U$ the operator norm of the associated harmonic map $H$ into the hyperbolic 3-space goes to $\infty$, and that 
$H$ extends (after glueing the two hyperbolic balls along its $\infty$-boundary $S^2$) to a smooth map $f\colon\widetilde M_q\to S^3$ on the universal covering $\widetilde M_q\to M_q$. 

Recall that on a simply connected subset $\tilde U \subset U$ the  harmonic map $H$ corresponding to the real section $s$ of $\mathcal M_{DH}$  is given by
\[\pm\bar{F}^TF\colon \tilde U \longrightarrow \mathbb H^3\cup \mathbb H^3,\]
where $F$ is the parallel Frame of $\tilde\nabla^1$ in \eqref{birkhoffgauge}.  
A parallel frame $\Psi$ of $\nabla^1$ is then given by 
\[\Psi=(g^+(1) C) F,\]
with $g^+(\lambda)$ defined in \eqref{birkhoff-split}. Therefore, we have to analyse the behaviour of
\begin{equation}\label{fequation}
\begin{split}
f&=\pm \bar F^T F = \pm\bar\Psi^T  \left (\bar g^+(1)^{-1}\right)^T \left(\bar{C}^{-1}\right )^T C^{-1} g^+(1)^{-1}\Psi\\
 &=\bar\Psi^T  \left(\bar g^+(1)^{-1}\right)^T \delta^{-1}(\pm \bar{C} \delta C^{-1}) g^+(1)^{-1}\Psi
 =\bar\Psi^T  \left(\bar g^+(1)^{-1}\right)^T \delta^{-1} \bar B g^+(1)^{-1}\Psi
 \end{split}
 \end{equation}
when $g(\lambda)$ leaves the big cell. Let $x\in W$. By Lemma \ref{normal-form-O1} there exists a neighbourhood $\tilde U\subset M_q$ of $x$ such that for all $y \in \tilde U$ we have the factorisation
\[g_y=h_y^+\begin{pmatrix} \lambda^{-1}& r(y)\\0 & \lambda\end{pmatrix} h_y^-\]
for a $\mathcal C^k$-function $r\colon \tilde U\to \R$ (Lemma \ref{realrfun}) with
sufficiently large $k\in\mathbb N$. Recall that $r\neq 0$ is equivalent to $g_y$ lying in the big cell, since
\[g_y=h^+(y)\begin{pmatrix} r(y)& 0\\  \lambda&\tfrac{1}{r(y)}\end{pmatrix}\begin{pmatrix} \tfrac{1}{r(y)}\lambda^{-1}& 1\\  -1&0\end{pmatrix} h^-(y)\]
with 
$$g^+_y = h^+(y)\begin{pmatrix} r(y)& 0\\  \lambda&\tfrac{1}{r(y)}\end{pmatrix}$$
gives a Birkhoff factorisation of $g_y$.
For $y \in \tilde U$ lying in the big cell \eqref{fequation} thus yields
\begin{equation}\label{fequation2}
\begin{split}
f&=\bar\Psi^T  (\bar g^+(1)^{-1})^T \delta^{-1} \bar B g^+(1)^{-1}\Psi\\
&=\bar\Psi^T  (\bar h^+(1)^{-1})^T \delta^{-1}\begin{pmatrix} \bar r& 0\\  1&\tfrac{1}{\bar r}\end{pmatrix}\bar B\begin{pmatrix} \tfrac{1}{r}& 0\\  -1&r\end{pmatrix} h^+(1)^{-1}\Psi.\\
\end{split}
\end{equation}
For every $y\in \tilde U$ fixed, the map $y\mapsto h^+(1)(y)^{-1}\Psi(y)\colon \tilde U\to\SL(2,\C)$ is well-defined and acts on 
$\mathbb H^3\cup\mathbb H^3$ by isometries via \eqref{SLonS3}. Therefore, to analyse the behaviour for $H \rightarrow \infty,$ it is sufficient to consider the term \begin{equation}\label{fequation3}
\begin{split}
\delta^{-1}\begin{pmatrix} \bar r& 0\\  1&\tfrac{1}{\bar r}\end{pmatrix}\bar B\begin{pmatrix} \tfrac{1}{r}& 0\\  -1&r\end{pmatrix}\\
\end{split}
\end{equation}
for $r(y)\to0.$

As in the proof of Lemma \ref{realrfun} we consider 
$A=h^- (h^+)^*$ of the form \eqref{AFORM} with coefficients satisfying \eqref{algconsabc}.
Then the constant loop
$B$ is given by
\begin{equation}\label{bident}B=g^- (g^+)^*=\begin{pmatrix} \tfrac{1}{r}\lambda^{-1}& 1\\  -1&0\end{pmatrix} A\begin{pmatrix} \bar r& 0\\  -\lambda^{-1}&\tfrac{1}{\bar r}\end{pmatrix}=\begin{pmatrix} c\bar r & \tfrac{d}{\bar r}\\ -a \bar r& -\tfrac{b}{\bar r}  \end{pmatrix}
\end{equation}
and
\begin{equation}\label{fequation4}
\begin{split}
\delta^{-1}\begin{pmatrix} \bar r& 0\\  1&\tfrac{1}{\bar r}\end{pmatrix}\bar B\begin{pmatrix} \tfrac{1}{r}& 0\\  -1&r\end{pmatrix}=
\begin{pmatrix} -c +\tfrac{\bar a}{\bar r}+\tfrac{\bar d}{r}-\bar c& -\bar d+cr\\ (-\tfrac{\bar d}{r}+\bar c) \bar r& \bar d \bar r\end{pmatrix},\\
\end{split}
\end{equation}
where we have used $\bar b=c r\bar r.$
Therefore, the map in \eqref{fequation3} takes values in the space of symmetric matrices $\{C\in\SL(2,\C)\mid C\mapsto \bar C^T\}$ by  \eqref{algconsabc}.  
Because
\[1=\det{\begin{pmatrix} a& b\\ c& d\end{pmatrix}}=ad-bc=d\,\bar d-c_1\, d\, r-c\,\bar c\,r\,\bar r\]
 $d(y)\, \bar d(y)\to 1$ for $r(y)\to 0$. Therefore, 
the upper left entry of the right hand side of \eqref{fequation4} goes to $\infty$ with the same order as the vanishing order of $r$.
The other entries remain finite. 

It remains to prove that the vanishing order of $r$ at $x \in W$ is 1 and that $\partial U$ is a smooth 1-dimensional submanifold for $\rho \sim 0$. Let $\rho = 0$ and $x \in W$.  In this case it is well-known from \cite[$\S$6]{BaBo} that the surface $f$ intersects the boundary at infinity transversely. It follows from \eqref{fequation4}  that the differential of $r$ at $x$ does not vanish.

This property is preserved for $\rho \sim 0$, because the data depend continuously on $\rho$. 
In particular, 
the upper left entry of the right hand side of \eqref{fequation4} goes to $\infty$ with order 1,
while the other entries remain finite valued. 
Lemma \ref{normal-form-O1} shows that $\partial U$ is 
given by the vanishing locus of $r$, thus $\partial U$ is a smooth 1-dimensional submanifold of $M_q$.

By construction, the Higgs fields of the real sections in Theorem \ref{MainTheorem} are nilpotent.
This implies that the corresponding harmonic maps on $\widetilde M$ are conformal. Therefore,  
 $\tilde f_{\mid \widetilde U}$ is a conformally parametrized equivariant minimal surface in the hyperbolic 3-space defined on the preimage  $\widetilde U=\pi^{-1} (U)$. In particular,  $\tilde f_{\mid \widetilde U}$ is a Willmore surface, and as $\tilde f$ is  smooth
and $\widetilde U\subset\widetilde M$ is a open dense subset, $\tilde f$ satisfies the Willmore Euler-Lagrange equation globally.

\end{proof}

\begin{remark}[Higher solutions and Willmore surfaces]\label{remwim}
The higher solutions of Hitchin's self-duality equations constructed here are given by isothermic Willmore surfaces that are 
locally but not globally minimal in a space form. Willmore tori in the 4-sphere are shown to form an integrable system in 
\cite{Bohle}. They  are obtained through an associated family of flat $\SL(4, \C)$-connections $\nabla^\mu$ for $\mu \in \C^*$ \cite{FLPP}. The associated family is encoded by the spectral curve $\Sigma$ which is a $4$-fold covering of $\C P^1$ and 
possesses an additional involution $\sigma$ if the target is a $3$-dimensional space form. The quotient $\Sigma/\sigma$ is 
then a hyperelliptic curve. In the case of isothermic Willmore tori this quotient is another $\C P^1$ and the family of flat $\SL(4,\C)$-connections splits into the direct sum of two (gauge equivalent) rank 2 families of flat connections parametrized 
by $\lambda \in \Sigma/\sigma $, see \cite{Hel2}.  The double covering of the $\mu$-plane by the $\lambda$-plane 
corresponds to taking a square root. Therefore, the rank 2 associated family of flat connections obtained through this 
construction is invariant under a real involution covering either  $\lambda\mapsto\bar\lambda^{-1}$ or $\lambda\mapsto-\bar\lambda^{-1}$ on $\C P^1$, i.e., it corresponds to the harmonic maps into the 3-sphere or the self-duality equations case. We expect 
more sophisticated real sections to emerge from Willmore tori where $\Sigma/\sigma$ has non-trivial topology. 
 \end{remark}

\begin{acknowledgements}
The second author was supported by RTG 1670 {\em Mathematics inspired by string theory and quantum field theory} funded by the Deutsche Forschungsgemeinschaft (DFG).
\end{acknowledgements}


\begin{thebibliography}{100}
\bibitem{BaBo} 
{\sc M.~Babich, A.~Bobenko.} 
\newblock
{\em Willmore tori with umbilic lines and minimal surfaces in hyperbolic space,} 
\newblock
{ Duke Math. J.}, {\bf {72}}, {no. 1}, {pp.~151--185} {(1993)}.

\bibitem{BeHRo} {\sc F. Beck, S. Heller, M. R\"oser}, {\em Energy of sections of Deligne-Hitchin moduli spaces},
arXiv:1903.02497.

\bibitem{BHR} {\sc I. Biswas, S. Heller, M. R\"oser}, {\em Real sections of the Deligne-Hitchin moduli space}, Comm. Math. Phys., Volume 366, Issue 3 (2019), pages 1099--1133.

\bibitem{Bo}   {\sc A.I. Bobenko}, {\em All constant mean curvature tori in $R^3$, $S^3$ and $H^3$ in terms of theta-functions}, Math. Ann., 290(2): 209--245, 1991.

 \bibitem{Bohle} 
 {\sc C.~Bohle}.
  \newblock
   {\em Constrained Willmore tori in the 4-sphere},
  \newblock
J. Diff. Geom., {\bf {86}},  {pp.~71--131} {(2010)}.   


\bibitem{BuCa} F. E. Burstall and D. M. J. Calderbank, {\em Conformal submanifold geometry I-III}, arXiv:1006.5700.


\bibitem{Do}{\sc S. Donaldson}, {\em Twisted harmonic maps and the self-duality equations} Proc. London Math. Soc. (3) 55 (1987), no. 1, 127-131

\bibitem{ES} {\sc J. Eells, J. H. Sampson.}
\newblock
 {\em Harmonic mappings of Riemannian manifolds},
 Amer. J. Math., {\bf 86}, pp.~109--160 (1964).
 
 

\bibitem{FLPP} {\sc D. Ferus, K. Leschke, F. Pedit and U. Pinkall}, {\em Quaternionic holomorphic geometry: Pl\"ucker formula, Dirac eigenvalue estimates and energy estimates of harmonic 2-tori}, Inventiones Mathematicae 146 (2001), 507--593.


\bibitem{Hel2} {\sc L.~Heller.} 
\newblock 
{\em Constrained Willmore and CMC tori in the $3$-sphere},
  Differ. Geom. Appl.,  \textbf{40}, pp.~232--242 (2015).

\bibitem{HeHe} {\sc L. Heller, S. Heller}, {\em Abelianization of Fuchsian systems on the 4-punctured sphere and applications} Journal of Symplectic Geometry, Vol. 14, No. 4 (2016), pp. 1059--1088 .
  

  \bibitem{HeHeSch} {\sc L. Heller, S. Heller, N.Schmitt}, {\em Navigating the space of symmetric CMC surfaces},  J. Differential Geom.,  Volume 110, Number 3 (2018), 413--455.

 

\bibitem{H3}  {\sc S. Heller}, {\em  A spectral curve approach to Lawson symmetric CMC surfaces of
  genus $2$}. Math. Annalen, Volume 360, Issue 3 (2014), Page 607--652.
  
 \bibitem{Hi2} {\sc N. J. Hitchin}, {\em  Harmonic maps from a $2$-torus to the $3$-sphere}, J. Differential Geom.  31 (1990), no. 3, 627-710. 
 
 \bibitem{Hi1} {\sc N. J. Hitchin}, {\em  The self-duality equations on a Riemann surface}. Proc. London Math. Soc. (3) 55 (1987), no. 1, 59-126. 
  
\bibitem{HKLR} {\sc N. J. Hitchin, A. Karlhede, U. Lindstr\"om, and M. Rocek}, {\em Hyperk\"ahler 
Metrics and Supersymmetry}, Commun. Math. Phys. {\bf 108} (1987), 535--589.


\bibitem{KiWi} {\sc S. Kim, G. Wilkin}, {\em Analytic convergence of harmonic metrics for parabolic Higgs bundles},
Journal of Geometry and Physics, vol. 127, pp. 55--67.



\bibitem{McI} {\sc I. McIntosh}, {\em Harmonic tori and their spectral data, Surveys on geometry and integrable systems,} 285--314, Adv. Stud. Pure Math., 51, Math. Soc. Japan, Tokyo, 2008.


\bibitem{NR}{\sc M.S. Narasimhan, S. Ramanan}, {\em Moduli of vector bundles on a compact Riemann surface}, Ann. Math.
(2) 89, 14--51 (1969).

\bibitem{PS} {\sc A. Pressley, G. Segal}, {\em  Loop groups. Oxford Mathematical Monographs}. Oxford University Press, New York, 1986. 


\bibitem{Si0} {\sc C. Simpson}
\newblock {\em Constructing variations of {H}odge structure using
{Y}ang--{M}ills theory and applications to uniformization},  Jour. Amer. Math. Soc.
\textbf{1} (1988), 867--918.

\bibitem{Si1} {\sc C. Simpson}, {\em Harmonic bundles on non-compact curves}, Journal of the AMS, Vol. 3, No. 3 (Jul., 1990), pp. 713--770.


\bibitem{Si} {\sc C. Simpson}, {\em  The Hodge filtration on nonabelian cohomology}. Algebraic geometry--Santa Cruz 1995, 217--281, Proc. Sympos. Pure Math., 62, Part 2, Amer. Math. Soc., Providence, RI, 1997

\bibitem{Si3} {\sc C. Simpson}, {\em Mixed twistor structures}, https://arxiv.org/pdf/alg-geom/9705006.pdf.
\bibitem{Si2} {\sc C. Simpson}, {\em A weight two phenomenon for the moduli of rank one local systems on open varieties}. From Hodge theory to integrability and TQFT tt*-geometry, 175--214, Proc. Sympos. Pure Math., 78, Amer. Math. Soc., Providence, RI, 2008.


\end{thebibliography}
\end{document}